\newtheorem{theorem}{Theorem}[section]
\newtheorem{lemma}[theorem]{Lemma}
\newtheorem{corollary}[theorem]{Corollary}
\theoremstyle{definition}
\newtheorem{definition}[theorem]{Definition}
\numberwithin{equation}{section}
\theoremstyle{remark}
\newtheorem*{remark}{Remark}
\providecommand{\norm}[1]{ \lVert#1  \rVert}
\newcommand{\dx}{\, d x}
\newcommand{\dy}{\, d y}
\newcommand{\dla}{\, d \lambda}
\newcommand{\lp}{\left (}
\newcommand{\rp}{\right )}
\def\Xint#1{\mathchoice
   {\XXint\displaystyle\textstyle{#1}}%
   {\XXint\textstyle\scriptstyle{#1}}%
   {\XXint\scriptstyle\scriptscriptstyle{#1}}%
   {\XXint\scriptscriptstyle\scriptscriptstyle{#1}}%
   \!\int}
\def\XXint#1#2#3{{\setbox0=\hbox{$#1{#2#3}{\int}$}
     \vcenter{\hbox{$#2#3$}}\kern-.5\wd0}}
\def\dashint{\Xint-}
\newcommand{\citecomment}[2][]{\citen{#2}#1\citevar}
\newcommand{\citeone}[1]{\citecomment{#1}}
\newcommand{\citetwo}[2][]{\citecomment[,~#1]{#2}}
\newcommand{\citevar}{\@ifnextchar\bgroup{;~\citeone}{\@ifnextchar[{;~\citetwo}{]}}}
\newcommand{\citefirst}{\@ifnextchar\bgroup{\citeone}{\@ifnextchar[{\citetwo}{]}}}
\newcommand{\cites}{[\citefirst}
\begin{document}

\title{Dyadic John--Nirenberg space}

\author{Juha Kinnunen}
\address{Department of Mathematics, Aalto University, P.O. Box 11100, FI-00076 Aalto, Finland}
\email{juha.k.kinnunen@aalto.fi}
\thanks{The research was supported by the Academy of Finland.}

\author{Kim Myyryl\"ainen}
\address{Department of Mathematics, Aalto University, P.O. Box 11100, FI-00076 Aalto, Finland}
\email{kim.myyrylainen@aalto.fi}

\subjclass[2020]{42B25, 42B35}

\keywords{John--Nirenberg space, dyadic, maximal function, median, John--Nirenberg inequality}

\begin{abstract}
We discuss the dyadic John--Nirenberg space that is a generalization of functions of bounded mean oscillation.
A John--Nirenberg inequality, which gives a weak type estimate for the oscillation of a function, is discussed in the setting of medians instead of integral averages.
We show that the dyadic maximal operator is bounded on the dyadic John--Nirenberg space and provide a method to construct nontrivial functions in the dyadic John--Nirenberg space.
Moreover, we prove that the John--Nirenberg space is complete. Several open problems are also discussed.
\end{abstract}

\maketitle

\section{Introduction}

The space of functions of bounded mean oscillation ($BMO$) was introduced by John and Nirenberg in~\cite{john_original}. 
Let $Q_0$ be a cube with sides parallel to the coordinate axis in $\mathbb R^n$. 
A function $f \in L^1(Q_0)$ belongs to $BMO(Q_0)$ if
\begin{equation}\label{bmo}
\sup \dashint_{Q} |f-f_{Q}| \dx < \infty,
\end{equation}
where the supremum is taken over all subcubes of $Q_0$. 
Throughout, we denote the integral average over a cube by a barred integral sign or $f_Q$.
A more general $BMO$-type space was also discussed in~\cite{john_original}.
A function $f \in L^1(Q_0)$ belongs to the John--Nirenberg space $JN_p(Q_0)$, $1<p<\infty$, if
\begin{equation}
\label{original_JNp}
\sup \sum_{i=1}^\infty |Q_i| \biggl( \dashint_{Q_i} |f-f_{Q_i}| \dx \biggr)^p < \infty,
\end{equation}
where the supremum is taken over countable collections  $\{Q_i\}_{i\in\mathbb N}$ of pairwise disjoint subcubes of $Q_0$.
The space $BMO(Q_0)$ is obtained as the limit of $JN_p(Q_0)$ as $p\to\infty$.
John~\cite{johnmedian} considered a way to define $BMO(Q_0)$ for any measurable function $f$ on $Q_0$ and
this approach has been developed further by Str\"omberg~\cite{stromberg} and Jawerth and Torchinsky~\cite{jawerth_torchinsky}.
In this case,~\eqref{bmo} is replaced with 
\begin{equation}\label{bmo_zero}
\sup\inf_{c\in\mathbb R}\inf\{a\ge0:|\{x\in Q:|f(x)-c|>a\}|<s|Q|\}<\infty,
\end{equation}
where the supremum is taken over all subcubes of $Q_0$ and $s$ is a fixed parameter with $0<s\le1$.
Perhaps the most common parameter value is $s=\frac12$ and, for $0<s \leq 1$, we obtain a biased notion of $s$-median.
Medians have been studied and applied in many problems; see for example~\cite{federer_herbert_william, fujii, gogatishvili, heikkinen, heikkinen_measuredensity, heikkinen_kinnunen, approxquasi, approxholder, jawerth_perez_welland, jawerth_torchinsky, johnmedian, karak, lerner, lerner_perez, medians, stromberg, weightedhardy, zhou}.

This paper discusses several new results related to the definition and properties of the John--Nirenberg space with $s$-medians (Definition~\ref{Def_medJN_eucli}).
In particular, this extends the median approach of $BMO$ in~\eqref{bmo_zero} to John--Nirenberg spaces. 
We restrict our attention to the dyadic case, that is, the cubes in~\eqref{original_JNp} are assumed to be dyadic subcubes of $Q_0$.
The dyadic structure has many advantages in the theory of John--Nirenberg spaces.
For some of our results, it does not matter whether we consider dyadic cubes or all subcubes of $Q_0$, but some results hold exclusively for dyadic cubes.
We study a John--Nirenberg inequality for the dyadic John--Nirenberg space with $s$-medians (Theorem~\ref{thm_JN_eucli}).
Our proof is based on relatively standard arguments.
Related questions on metric measure spaces have been studied by Lerner and P\'{e}rez~\cite{lerner_perez} and Myyryl\"ainen~\cite{myyrylainen}. 
We reconsider dyadic versions of these results in the Euclidean context.
As a consequence (Corollary~\ref{equivalence_euclidean}), we show that the dyadic John--Nirenberg space with medians coincides with the dyadic John--Nirenberg space with integral averages.
Thus, it does not matter which one we consider. However, assumptions in the median approach are initially weaker, since the function does not need to be integrable.

Bennett, DeVore and Sharpley~\cite{bennett} showed that the Hardy--Littlewood maximal operator is bounded on $BMO$.
For a short proof, we refer to Chiarenza and Frasca~\cite{chiarenza}.
We show that the dyadic maximal operator is bounded on the dyadic John--Nirenberg space $JN^d_p(Q_0)$ (Theorem~\ref{thm_mfbound}).
To our knowledge, this result is new.
The proof is based on the John--Nirenberg inequality.
A similar argument, with the weak type estimate for the maximal operator, gives an $L^1$ result for the dyadic maximal operator (Theorem~\ref{construct_dyaJNp}).
Using this result together with a theorem of Stein~\cite{stein_LlogL}, we obtain a method to construct functions in $JN^d_p(Q_0) \setminus L^p(Q_0)$.
This complements results by Dafni, Hyt\"onen, Korte and Yue~\cite{korte} in the dyadic case.
Motivated by Theorem~\ref{construct_dyaJNp}, it is an open question whether there exists a Coifman-Rochberg~\cite{coifmanrochberg} type characterization for the dyadic John--Nirenberg space.
A one-dimensional example in Section~4 demonstrates that the $L^1$ result in its generality does not hold for the standard John--Nirenberg space.
The standard $BMO$ is complete with respect to the $BMO$ seminorm; see~\cite{neri}.
We prove that the dyadic John--Nirenberg space is complete (Theorem~\ref{completeness}).
This also holds for the standard John--Nirenberg space $JN_p(Q_0)$.
The connection between the dyadic $BMO$ and the standard $BMO$ has been studied by Garnett and Jones in~\cite{garnett}.
The corresponding result is also true for the John--Nirenberg spaces.

\section{Preliminaries}

The Lebesgue measure of a measurable subset  $A$ of $\mathbb{R}^n$ is denoted by $|A|$.
The integral average of $f \in L^1(A)$ in $A$, with $0<|A|<\infty$, is denoted by
\[
f_A = \dashint_A f \dx = \frac{1}{|A|} \int_A f \dx .
\]

In many cases, it is preferable to consider medians instead of integral averages.
Let $0<s \leq 1$. Assume that $A \subset \mathbb{R}^n$ is a measurable set with $0<|A|<\infty$ and that $f:A\rightarrow [-\infty, \infty]$ is a measurable function.
A number $a\in\mathbb R$ is called an $s$-median of $f$ over $A$, if
\[
|\{x \in A: f(x) > a\}| \le s |A|
\quad\text{and}\quad
|\{x \in A: f(x) < a\}| \le (1-s)|A|.
\]
In general, the $s$-median is not unique. To obtain a uniquely defined notion, we consider the maximal $s$-median as in \cite{medians}.

\begin{definition}
\label{maxmedian}
Let $0<s \leq 1$. Assume that $A \subset \mathbb{R}^n$ is a measurable set with $0<|A|<\infty$ and that $f:A\rightarrow [-\infty, \infty]$ is a measurable function.
The maximal $s$-median of $f$ over $A$ is defined as
\[
m_f^s(A) = \inf \{a \in \mathbb{R} : |\{x \in A: f(x) > a\}| < s |A| \} .
\]
\end{definition}

The maximal $s$-median of a function is an $s$-median~\cite{medians}.
In the next lemma, we list the basic properties of the maximal $s$-median.
We refer to~\cite{myyrylainen} where the properties are proven in metric measure spaces.
The arguments are identical for Euclidean spaces.
The proofs of properties (i), (ii), (v), (vii), (viii) and (ix) can also be found in~\cite{medians}.
In addition, most of these properties are listed without proofs in~\cite{approxquasi, approxholder}.

\begin{lemma}
\label{medianprops}
Let $0<s \leq 1$. Assume that $A \subset \mathbb{R}^n$ is a measurable set with $0<|A|<\infty$ and that $f,g:A\rightarrow [-\infty, \infty]$ is a measurable function.
The maximal $s$-median has the following properties.
\begin{enumerate}[(i),topsep=5pt,itemsep=5pt]

\item$m_f^{s'}(A) \leq m_f^s(A)$ for $s \leq s'$.

\item $m_f^s(A) \leq m_g^s(A)$ whenever $f\leq g$ $\mu$-almost everywhere in $A$.

\item If $A \subset A'$ and $|A'| \leq c |A|$ with some $c \geq 1$, then $m_f^s(A) \leq m_f^{s/c}(A')$.

\item $m_{\varphi \circ f}^s(A) = \varphi(m_f^s(A))$ for an increasing continuous function $\varphi: f(A) \to [-\infty, \infty]$.

\item $m_f^s(A) + c = m_{f+c}^s(A)$ for $c \in \mathbb{R}$.

\item $m_{cf}^s(A) = c \, m_f^s(A)$ for $c > 0$.

\item $|m_{f}^s(A)| \leq m_{|f|}^{\min\{s,1-s\}}(A)$.

\item $m_{f+g}^s(A) \leq m_f^{t_1}(A) + m_g^{t_2}(A)$ whenever $t_1 + t_2 \leq s$.

\item For $f \in L^p(A)$ and $p>0$, \[ m_{|f|}^s(A) \leq \biggl( s^{-1} \dashint_A |f|^p \dx \biggr)^{\frac{1}{p}}. \]

\item If $A_i$ are pairwise disjoint for every $i \in \mathbb{N}$, then
\[
\inf_{i} m_f^s(A_i) \leq m_f^s\Bigl(\bigcup_{i=1}^\infty A_i\Bigr) \leq \sup_{i} m_f^s(A_i) .
\]

\end{enumerate}
\end{lemma}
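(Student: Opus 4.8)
The plan is to reduce every item to elementary statements about the super-level sets $E_a = \{x \in A : f(x) > a\}$ and the non-increasing function $a \mapsto |E_a|$, together with the structural fact recalled just before the lemma: the maximal $s$-median is itself an $s$-median, so $m = m_f^s(A)$ satisfies both $|\{x \in A : f(x) > m\}| \le s|A|$ and $|\{x \in A : f(x) < m\}| \le (1-s)|A|$. From the definition as an infimum I would first record the two inequalities used constantly: for every $b < m_f^s(A)$ one has $|E_b| \ge s|A|$, while for every $b > m_f^s(A)$ one has $|E_b| < s|A|$ (the latter by choosing an admissible level in $(m_f^s(A), b]$ and using monotonicity of $|E_a|$ in $a$). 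Almost everything then follows by manipulating the admissible set of levels $\{a : |E_a| < s|A|\}$.

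The algebraic and monotonicity items are then immediate. For (i), decreasing $s$ shrinks the admissible set, so its infimum grows; for (ii), the inclusion $\{f > a\} \subseteq \{g > a\}$ up to a null set shows every level admissible for $g$ is admissible for $f$; for (v) and (vi), the substitutions $a \mapsto a - c$ and $a \mapsto a/c$ (using $c > 0$) carry the admissible set of $f$ to that of $f+c$, respectively $cf$. For (iii), I would fix $a > m_f^{s/c}(A')$, use $\{x \in A : f > a\} \subseteq \{x \in A' : f > a\}$ and $|A'| \le c|A|$ to get $|\{x \in A : f > a\}| < (s/c)|A'| \le s|A|$, hence $m_f^s(A) \le a$, and let $a$ decrease. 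For (iv), the level-set identity $\{\varphi \circ f > b\} = \{f > \varphi^{-1}(b)\}$ gives the claim for strictly increasing $\varphi$; for a merely increasing continuous $\varphi$ I would reach it by a short continuity argument relating the level $b$ to levels just above $m_f^s(A)$.

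The three estimates (vii)--(ix) rest on the two-sided median property. For (viii), I would take $a > m_f^{t_1}(A)$ and $b > m_g^{t_2}(A)$, use $\{f+g > a+b\} \subseteq \{f > a\} \cup \{g > b\}$ and the union bound to get $|\{f+g > a+b\}| < (t_1 + t_2)|A| \le s|A|$, so $a+b$ is admissible for $f+g$; minimizing over $a,b$ finishes it. For (ix), Chebyshev's inequality $|E_b| \le b^{-p}\int_A |f|^p \dx$ applied at levels $b < m_{|f|}^s(A)$, where $|E_b| \ge s|A|$, yields $b \le (s^{-1}\dashint_A |f|^p \dx)^{1/p}$, and letting $b \uparrow m_{|f|}^s(A)$ concludes. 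For (vii), I would set $t = \min\{s, 1-s\}$ and treat $m = m_f^s(A) \ge 0$ and $m < 0$ separately: when $m \ge 0$, combining (ii) and (i) gives $m_f^s(A) \le m_{|f|}^s(A) \le m_{|f|}^t(A)$; when $m < 0$, I would show every $0 \le a < |m|$ fails to be admissible for $|f|$ at parameter $t$, using $\{|f| > a\} \supseteq \{f < -a\}$ together with $|\{f < b\}| \ge |\{f \le m\}| \ge (1-s)|A| \ge t|A|$ for $b > m$.

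The genuinely delicate item is (x), where I expect the main obstacle. Writing $A = \bigcup_i A_i$ and using countable additivity, $|\{x \in A : f > b\}| = \sum_i |\{x \in A_i : f > b\}|$. For the lower bound, any $b < \inf_i m_f^s(A_i)$ gives $|\{x \in A_i : f > b\}| \ge s|A_i|$ for every $i$, and summing yields $|\{x \in A : f > b\}| \ge s|A|$, so $m_f^s(A) \ge \inf_i m_f^s(A_i)$. The upper bound is the subtle point: for $b > \sup_i m_f^s(A_i)$ each term satisfies the \emph{strict} inequality $|\{x \in A_i : f > b\}| < s|A_i|$, but a naive summation of strict inequalities only returns $\le s|A|$. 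I would preserve strictness by extracting a definite amount of slack from a single set, writing $\varepsilon_i = s|A_i| - |\{x \in A_i : f > b\}| > 0$ and noting $\sum_i \varepsilon_i \ge \varepsilon_1 > 0$, so that $|\{x \in A : f > b\}| = s|A| - \sum_i \varepsilon_i < s|A|$; hence $m_f^s(A) \le b$, and letting $b$ decrease gives $m_f^s(A) \le \sup_i m_f^s(A_i)$.
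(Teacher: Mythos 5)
The paper does not actually prove this lemma: it defers to the references (Myyryl\"ainen for the metric-space versions of all ten properties, and Poelhuis--Torchinsky for (i), (ii), (v), (vii), (viii), (ix)), so there is no in-paper proof to compare against. Your argument is correct and is essentially the standard one from those sources: everything is reduced to the monotone, right-continuous distribution function $a \mapsto \abs{E_a}$ and the two basic facts that $\abs{E_b} \ge s\abs{A}$ for $b < m_f^s(A)$ and $\abs{E_b} < s\abs{A}$ for $b > m_f^s(A)$, together with the fact (stated in the paper just before the lemma) that the maximal $s$-median is itself an $s$-median, which you use correctly in the negative case of (vii). The one genuinely delicate point, the upper bound in (x), is where a careless summation of countably many strict inequalities would only yield $\le s\abs{A}$; your slack argument $\sum_i \varepsilon_i \ge \varepsilon_1 > 0$ handles it correctly (implicitly using $\abs{A_1} > 0$, which is forced by the definition of the median on $A_1$). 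The only step left as a sketch is (iv) for non-strictly-increasing $\varphi$, but the continuity argument you indicate (choosing $a' < m$ with $\varphi(a') > b$ so that $\{f > a'\} \subseteq \{\varphi \circ f > b\}$) goes through without difficulty.
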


\begin{remark}
Assume that $0<s\leq\frac12$. Then property (vii) assumes a slightly simpler form
\[
|m_{f}^s(A)| \leq m_{|f|}^{\min\{s,1-s\}}(A) = m_{|f|}^s(A) ,
\]
since 
\[
m_{|f|}^{1-s}(A) \leq m_{|f|}^s(A)
\]
for $0<s\leq\frac12$.
\end{remark}

A cube $Q$ is a bounded interval in $\mathbb R^n$, with sides parallel to the coordinate axes and equally long, that is,
$Q=[a_1,b_1] \times\dots\times [a_n, b_n]$
with $b_1-a_1=\ldots=b_n-a_n$. 
The side length of $Q$ is $l(Q)=b_1-a_1$.
In case we want to specify the center of a cube, we write 
$Q=Q(x,r)=\{y \in \mathbb R^n: |y_i-x_i| \leq r,\,i=1,\dots,n\}$
for $x\in\mathbb R^n$ and $r>0$. 
We consider closed cubes, but the results hold for open and half open cubes as well.

Let $Q_0\subset \mathbb R^n$ be a cube. 
The dyadic decomposition $\mathcal{D}(Q_0)$ of $Q_0$ is defined as
$\mathcal{D}(Q_0)=\bigcup_{j=0}^\infty\mathcal{D}_j(Q_0)$,
where each
$\mathcal{D}_j(Q_0)$ consists of $2^{jn}$ cubes $Q$, with pairwise disjoint  interiors and side length $l(Q)=2^{-j}l(Q_0)$,
such that $Q_0=\bigcup\{Q:Q\in\mathcal{D}_j(Q_0)\}$ for every $j\in \mathbb N_0$.
If $j\ge 1$ and $Q\in\mathcal{D}_j(Q_0)$, there exists a unique cube $Q'\in\mathcal{D}_{j-1}(Q_0)$
with $Q\subset Q'$. The cube $Q'$ is called the dyadic parent of $Q$, and $Q$ is a dyadic child of $Q'$.

We recall the Lebesgue differentiation theorem for medians.
The proof can be found in~\cite{medians}.

\begin{lemma}
\label{leb.diff.medians}
Let $f: \mathbb{R}^n \rightarrow [-\infty, \infty]$ be a measurable function which is finite almost everywhere in $\mathbb R^n$ and $0<s\leq 1$. Then 
\[
\lim_{i \to \infty}  m_f^s(Q_i) = f(x)
\]
for almost every $x \in \mathbb{R}^n$, whenever $(Q_i)_{i\in\mathbb N}$ is a sequence of (dyadic) cubes containing $x$ such that $\lim_{i \to \infty} |Q_i| = 0$.

\end{lemma}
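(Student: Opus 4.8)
The plan is to reduce the statement to the classical Lebesgue differentiation theorem for integral averages by combining the algebraic properties of the maximal $s$-median from Lemma~\ref{medianprops} with a truncation that removes the possible non-integrability of $f$. Since $f$ is finite almost everywhere, it suffices to establish the limit at almost every point $x$ with $\abs{f(x)}<\infty$. Fix such an $x$, choose an integer $k>\abs{f(x)}$, and let $f_k=\max\{-k,\min\{f,k\}\}$ be the truncation of $f$ at height $k$. Then $f_k$ is bounded, so $f_k\in L^1(Q_i)$ for every $i$, and $f_k(x)=f(x)$.

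First I would treat the bounded function $f_k$. Off a null set $N_k$ every point is a Lebesgue point of $f_k$, and since the cubes $Q_i\ni x$ with $\abs{Q_i}\to0$ shrink regularly to $x$ (each $Q_i$ lies in the ball $B(x,\sqrt n\,l(Q_i))$, whose measure is comparable to $\abs{Q_i}$), the classical Lebesgue differentiation theorem gives
\[
\dashint_{Q_i}\abs{f_k-f_k(x)}\dy\longrightarrow0
\qquad(i\to\infty)
\]
for $x\notin N_k$. Chaining properties (v), (vii) and (ix) of Lemma~\ref{medianprops} (with exponent $p=1$ and $t=\min\{s,1-s\}$) then yields
\[
\abs{m_{f_k}^s(Q_i)-f_k(x)}
=\abs{m_{f_k-f_k(x)}^s(Q_i)}
\le m_{\abs{f_k-f_k(x)}}^{t}(Q_i)
\le t^{-1}\dashint_{Q_i}\abs{f_k-f_k(x)}\dy ,
\]
so that $m_{f_k}^s(Q_i)\to f_k(x)=f(x)$.

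The remaining step, which is the crux of the argument, is to pass from $f_k$ back to $f$. Here I would compare distribution functions inside $Q_i$: for every level $a\in(-k,k)$ one checks directly from the definition of $f_k$ that
\[
\{y\in Q_i:f(y)>a\}=\{y\in Q_i:f_k(y)>a\},
\]
because on $\{f>k\}$ both functions exceed $a$, on $\{f<-k\}$ neither does, and on $\{\abs{f}\le k\}$ one has $f_k=f$. Consequently, whenever $m_{f_k}^s(Q_i)$ lies in the open interval $(-k,k)$, the infima defining $m_f^s(Q_i)$ and $m_{f_k}^s(Q_i)$ are taken over the same set of admissible levels and hence agree. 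Since $m_{f_k}^s(Q_i)\to f(x)$ and $\abs{f(x)}<k$, the value $m_{f_k}^s(Q_i)$ eventually belongs to $(-k,k)$, whence $m_f^s(Q_i)=m_{f_k}^s(Q_i)$ for all large $i$, and the desired limit $m_f^s(Q_i)\to f(x)$ follows.

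Finally I would assemble the exceptional set: the conclusion holds for every $x$ outside the null set $N=\{\abs{f}=\infty\}\cup\bigcup_{k\in\mathbb N}N_k$, using for each such $x$ any integer $k>\abs{f(x)}$. The main obstacle is exactly the lack of integrability of $f$, which blocks a direct use of property (ix); truncation together with the exact coincidence of the distribution functions on $(-k,k)$ is what circumvents it, and everything else reduces to the classical differentiation theorem.
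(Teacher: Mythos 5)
Your argument is correct and essentially self-contained for $0<s<1$. Note that the paper does not actually prove this lemma; it only defers to the reference \cite{medians}, so there is no in-paper proof to compare against. Your route --- truncate at height $k>|f(x)|$, apply the classical Lebesgue differentiation theorem (valid for cubes containing $x$ since they shrink regularly) to the bounded function $f_k$, control $|m^s_{f_k}(Q_i)-f_k(x)|$ by chaining properties (v), (vii) and (ix) of Lemma~\ref{medianprops}, and then pass back to $f$ by matching superlevel sets --- is sound. The one step worth writing out more carefully is the last one: the admissible sets $\{a:|\{y\in Q_i:f(y)>a\}|<s|Q_i|\}$ and $\{a:|\{y\in Q_i:f_k(y)>a\}|<s|Q_i|\}$ need not be globally equal (they can differ at levels $a\ge k$, where the superlevel set of $f_k$ is empty but that of $f$ need not be), so they are not literally ``the same set of admissible levels.'' However, both are upward-closed half-lines which agree on $(-k,k)$, and since the infimum of the one for $f_k$ lies in $(-k,k)$, the two infima coincide; your conclusion is therefore right, but the justification should invoke the upward-closedness rather than equality of the admissible sets.

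One genuine caveat: your proof does not cover the endpoint $s=1$ allowed in the statement, since then $t=\min\{s,1-s\}=0$ and properties (vii) and (ix) of Lemma~\ref{medianprops} become unusable. This is not a repairable omission in the proof but rather a defect of the statement: for $s=1$ the conclusion fails (take $f=\chi_E$ with $E$ a fat Cantor set, so that every cube meets the complement of $E$ in positive measure; then $m_f^1(Q)=0$ for every cube $Q$, while $f=1$ on a set of positive measure). The hypothesis should read $0<s<1$, which is all that is used elsewhere in the paper, and with that restriction your proof is complete.
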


We discuss a Calder\'{o}n-Zygmund decomposition with medians instead of integral averages.
The proof is a simple modification of the corresponding argument for integral averages in~\cite{john_original}.

\begin{lemma}
\label{C-Z_euclidean}
Let $Q_0 \subset \mathbb{R}^n$ be a cube and $0<t\leq 1$. Assume that $f:Q_0\to[-\infty,\infty]$ is a measurable function.
For every $\lambda\ge m_{|f|}^{t}(Q_0)$, there exist dyadic cubes $Q_i \in\mathcal D(Q_0)$, $i\in\mathbb N$,  with pairwise disjoint interiors, such that
\begin{enumerate}[(i),topsep=5pt,itemsep=5pt]

\item $ m_{|f|}^{t}(Q_i) > \lambda $,

\item $ m_{|f|}^{t}(Q_i') \leq \lambda $ where $Q_i'$ is the dyadic parent of $Q_i$,

\item $|f(x)| \leq \lambda$ for almost every $x \in Q_0 \setminus \bigcup_{i=1}^\infty Q_i $.

\end{enumerate}
The collection $\{Q_i\}_{i\in\mathbb N}$ is called the Calder\'{o}n-Zygmund cubes in $Q_0$ at level~$\lambda$. 
\end{lemma}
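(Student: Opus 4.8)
The plan is to run a dyadic stopping-time argument, exactly as in the classical Calder\'on--Zygmund decomposition, but with the maximal $t$-median of $|f|$ playing the role of the integral average. First I would consider the collection
\[
\mathcal{S} = \{ Q \in \mathcal{D}(Q_0) : m_{|f|}^{t}(Q) > \lambda \}
\]
of all dyadic subcubes of $Q_0$ whose maximal $t$-median of $|f|$ exceeds $\lambda$. Since $\lambda \ge m_{|f|}^{t}(Q_0)$, the root cube $Q_0$ does not belong to $\mathcal{S}$, so every $Q \in \mathcal{S}$ has a dyadic parent lying in $\mathcal{D}(Q_0)$. I then let $\{Q_i\}_{i \in \mathbb{N}}$ be the cubes in $\mathcal{S}$ that are maximal with respect to inclusion; each $Q \in \mathcal{S}$ is contained in exactly one of them, because its chain of dyadic ancestors must leave $\mathcal{S}$ before reaching $Q_0$. (If $\mathcal{S}$ is empty the collection is empty and only (iii) requires checking, which the argument below still covers.)

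The selection immediately yields (i) and (ii). Property (i) holds because $Q_i \in \mathcal{S}$ by definition. For (ii), maximality forces the dyadic parent $Q_i'$ to lie outside $\mathcal{S}$, which is precisely $m_{|f|}^{t}(Q_i') \le \lambda$. The cubes have pairwise disjoint interiors because two dyadic cubes are either nested or have disjoint interiors, and nesting is ruled out by maximality.

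For (iii), take $x \in Q_0 \setminus \bigcup_{i} Q_i$ and let $R_0 = Q_0 \supset R_1 \supset R_2 \supset \cdots$ be the chain of dyadic cubes containing $x$, so that $|R_k| = 2^{-kn}|Q_0| \to 0$. No $R_k$ lies in $\mathcal{S}$: otherwise $R_k$ would sit inside some maximal cube $Q_i$, giving $x \in Q_i$. Hence $m_{|f|}^{t}(R_k) \le \lambda$ for every $k$, and I would pass to the limit $k \to \infty$ using the Lebesgue differentiation theorem for medians (Lemma~\ref{leb.diff.medians}) to conclude $|f(x)| \le \lambda$ for almost every such $x$.

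The one point that needs care---and the main obstacle---is that $f$ is only assumed measurable, so $|f|$ may equal $+\infty$ on a set of positive measure (up to $t|Q_0|$, which is still compatible with $m_{|f|}^{t}(Q_0) \le \lambda$). Since Lemma~\ref{leb.diff.medians} requires a function that is finite almost everywhere, it cannot be applied to $|f|$ directly. I would circumvent this by composing with a bounded increasing continuous function, for instance $g = \arctan \circ |f|$, which is finite everywhere. By property (iv) of Lemma~\ref{medianprops} we have $m_g^{t}(R_k) = \arctan\bigl(m_{|f|}^{t}(R_k)\bigr) \le \arctan(\lambda)$, and applying Lemma~\ref{leb.diff.medians} to $g$ yields $\arctan(|f(x)|) \le \arctan(\lambda)$, hence $|f(x)| \le \lambda$, for almost every $x \notin \bigcup_i Q_i$; in particular such points automatically satisfy $|f(x)| < \infty$. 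The remainder is routine bookkeeping about countability of the dyadic family and null boundary sets.
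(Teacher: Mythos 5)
Your proof is correct and follows essentially the same stopping-time argument as the paper: select the maximal dyadic cubes in $\{Q\in\mathcal D(Q_0):m_{|f|}^{t}(Q)>\lambda\}$, deduce (i) and (ii) from maximality and the nestedness of dyadic cubes, and obtain (iii) from the Lebesgue differentiation theorem for medians. The only difference is your $\arctan$ composition, which patches the finiteness hypothesis of Lemma~\ref{leb.diff.medians} that the paper applies without comment; this is a legitimate but minor refinement rather than a different approach.
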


\begin{proof}
Consider the collection 
\[
\mathcal F_\lambda=\{Q\in\mathcal D(Q_0):m_{|f|}^{t}(Q) > \lambda\}.
\]
For every $x\in\bigcup_{Q \in \mathcal F_\lambda} Q$, there exists a cube $Q\in \mathcal F_\lambda$ with $x\in Q$ and $m_{|f|}^{t}(Q) > \lambda$.
It follows that there exists a unique maximal cube $Q_x\in \mathcal F_\lambda$ with $x\in Q_x$ and $m_{|f|}^{t}(Q_x) > \lambda$.
Maximality means that if $Q_x\subsetneq Q\in\mathcal{D}(Q_0)$, then $m_{|f|}^{t}(Q)\le\lambda$. 
Let $\{Q_i\}_{i\in\mathbb N}$ be the subcollection of $\mathcal F_\lambda$ of such maximal cubes.
If $Q_x=Q_0$ for some $x\in Q_0$, then $\mathcal F_\lambda = \{Q_0\}$
and there are no cubes $Q\in\mathcal{D}(Q_0)$ with $Q_x\subsetneq Q$. 
This happens if and only if $\lambda<m_{|f|}^{t}(Q_0)$, 
which contradicts the assumption $\lambda\ge m_{|f|}^{t}(Q_0)$. 

For two dyadic subcubes of $Q_0$, it holds that either one is contained in the other or the cubes have pairwise disjoint interiors. 
Thus, the collection $\{Q_i\}_{i\in\mathbb N}$ consists of cubes with pairwise disjoint interiors with
$m_{|f|}^{t}(Q_i) > \lambda$, $i\in\mathbb N$. This proves (i).
By maximality, it holds that $m_{|f|}^{t}(Q_i') \leq \lambda$ for every $i\in\mathbb N$, where $Q_i'$ is the dyadic parent of $Q_i$.
This implies (ii).
To prove (iii), assume that $x \in Q_0 \setminus \bigcup_{i=1}^\infty Q_i$. 
We have $m_{|f|}^{t}(Q) \leq \lambda$ for every dyadic subcube $Q$ of $Q_0$ containing $x$. 
Hence, there exist a decreasing sequence of dyadic subcubes $Q_k$ such that $x \in Q_k$ for every $k \in \mathbb{N}$ and $Q_{k+1} \subsetneq Q_k$.
The Lebesgue differentiation theorem for medians (Lemma~\ref{leb.diff.medians}) implies that
\[
|f(x)| = \lim_{k \to \infty} m_{|f|}^{t}(Q_k) \leq \lambda .
\]
for almost every point $x \in Q_0 \setminus \bigcup_{i=1}^\infty Q_i $.
\end{proof}

\section{John--Nirenberg inequality with medians}

This section discusses the John--Nirenberg inequality for median-type John--Nirenberg spaces.

\begin{definition}
\label{Def_medJN_eucli}
Let $Q_0 \subset \mathbb{R}^n$ be a cube, $1<p<\infty$ and $0<s \leq\frac12$, and assume that $f:Q_0\to[-\infty,\infty]$ is a measurable function. 
We say that $f$ belongs to the median-type dyadic John--Nirenberg space $JN^d_{p,0,s}(Q_0)$ if
\[
\norm{f}_{JN^d_{p,0,s}(Q_0)}^p = \sup \sum_{i=1}^{\infty} |Q_i| \bigl(\inf_{c_i \in \mathbb{R}} m_{|f-c_i|}^s (Q_i) \bigr)^p < \infty ,
\]
where the supremum is taken over countable collections $\{Q_i\}_{i\in\mathbb N}$ of pairwise disjoint dyadic subcubes of $Q_0$.
\end{definition}

The constants $c_i$ in the definition of $JN^d_{p,0,s}$ can be replaced by maximal $t$-medians with $0<s\leq t \leq \tfrac{1}{2}$.
A simple proof can be found in~\cite{myyrylainen}. For more on the median-type John--Nirenberg space, see~\cite{myyrylainen}.

\begin{lemma}
\label{lemma_med1_eucli}
Let $Q_0 \subset \mathbb{R}^n$ be a cube and assume that $f:Q_0\to[-\infty,\infty]$ is a measurable function. It holds that
\[
\norm{f}_{JN^d_{p,0,s}(Q_0)}^p \leq \sup \sum_{i=1}^{\infty} |Q_i| \bigl( m_{|f-m_f^t(Q_i)|}^s (Q_i) \bigr)^p \leq 2^p \norm{f}_{JN^d_{p,0,s}(Q_0)}^p ,
\]
whenever $0<s\leq t \leq\frac12$.

\end{lemma}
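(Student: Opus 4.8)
The plan is to prove the two inequalities separately, in each case reducing to a cubewise estimate that can then be inserted into the supremum over countable collections of pairwise disjoint dyadic subcubes of $Q_0$.

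The left inequality is immediate. For any dyadic cube $Q_i$, the maximal $t$-median $m_f^t(Q_i)$ is one admissible choice of the constant $c_i$ in the definition of $\norm{f}_{JN^d_{p,0,s}(Q_0)}$, so
\[
\inf_{c_i \in \mathbb{R}} m_{|f-c_i|}^s(Q_i) \leq m_{|f-m_f^t(Q_i)|}^s(Q_i).
\]
Raising to the power $p$, multiplying by $|Q_i|$, summing over $i$ and taking the supremum over all admissible collections yields the first inequality.

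For the right inequality, it suffices to establish the cubewise bound
\[
m_{|f-m_f^t(Q)|}^s(Q) \leq 2 \inf_{c \in \mathbb{R}} m_{|f-c|}^s(Q)
\]
for an arbitrary dyadic cube $Q$, since then the same operations (power $p$, measure weight, sum, supremum) give the claim with constant $2^p$. Fix $c \in \mathbb{R}$. By the triangle inequality $|f-m_f^t(Q)| \leq |f-c| + |c-m_f^t(Q)|$ pointwise, where $|c-m_f^t(Q)|$ is a nonnegative constant. Using monotonicity (Lemma~\ref{medianprops}(ii)) and then property (v) to peel off the additive constant, I obtain
\[
m_{|f-m_f^t(Q)|}^s(Q) \leq m_{|f-c|}^s(Q) + |c-m_f^t(Q)|.
\]
It remains to estimate the displacement $|c-m_f^t(Q)|$. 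By property (v), $c - m_f^t(Q) = -m_{f-c}^t(Q)$, so $|c-m_f^t(Q)| = |m_{f-c}^t(Q)|$. Property (vii) gives $|m_{f-c}^t(Q)| \leq m_{|f-c|}^{\min\{t,1-t\}}(Q)$, and since $0 < t \leq \tfrac12$ we have $\min\{t,1-t\} = t$. Finally, because $s \leq t$, parameter monotonicity (property (i)) gives $m_{|f-c|}^t(Q) \leq m_{|f-c|}^s(Q)$. Combining these, $|c-m_f^t(Q)| \leq m_{|f-c|}^s(Q)$, whence $m_{|f-m_f^t(Q)|}^s(Q) \leq 2\, m_{|f-c|}^s(Q)$, and taking the infimum over $c$ completes the cubewise bound.

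The only nontrivial point is controlling the displacement $|c-m_f^t(Q)|$ between an arbitrary constant $c$ and the maximal $t$-median by the oscillation quantity $m_{|f-c|}^s(Q)$ itself; this is exactly where the restriction $s \leq t \leq \tfrac12$ enters, through the interplay of property (vii), which requires $\min\{t,1-t\}=t$, and the parameter monotonicity (i), which requires $s \leq t$. Everything else is a routine application of monotonicity and the additive-constant property of the maximal median.
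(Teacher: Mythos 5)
Your proof is correct; note that the paper itself does not prove this lemma but defers to \cite{myyrylainen}, and your argument is exactly the standard one given there. The essential cubewise step --- bounding the displacement via $|c-m_f^t(Q)|=|m_{f-c}^t(Q)|\le m_{|f-c|}^{\min\{t,1-t\}}(Q)=m_{|f-c|}^t(Q)\le m_{|f-c|}^s(Q)$ using properties (v), (vii) (where $t\le\tfrac12$ is needed) and (i) (where $s\le t$ is needed) --- is the right one, and your reduction to a cubewise estimate followed by summing and taking the supremum over disjoint collections is carried out correctly.
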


\begin{definition}
Let $Q_0 \subset \mathbb{R}^n$ be a cube and $0<t\leq 1$, and assume that $f:Q_0\to[-\infty,\infty]$ is a measurable function. 
The median-type dyadic maximal function is defined by
\[
\mathcal{M}^{d,t}_{Q_0}f(x) = \sup_{Q \ni x} m_{|f|}^{t}(Q) ,
\]
where the supremum is taken over all dyadic subcubes $Q\in\mathcal D(Q_0)$ with $x \in Q$.
\end{definition}

The following good-$\lambda$ inequality is the main ingredient in the proof of the John--Nirenberg inequality.

\begin{lemma}
\label{goodlambd_eucli}
Let $0< t \leq \tfrac{1}{2^{n+1}} $, $K>1$ and $f \in JN^d_{p,0,s}(Q_0)$ for some $0<s \leq \tfrac{t}{2K^p}$, and assume that 
$m_{|f|}^{t}(Q_0) \leq \lambda$.
Then
\[
|E_{K\lambda}(Q_0)| \leq \frac{2^p}{(K-1)^p} \frac{\norm{f}_{JN^d_{p,0,s}}^p}{\lambda^p} + \frac{1}{2K^p} |E_{\lambda}(Q_0)| ,
\]
where $E_{\lambda}(Q_0) = \{ x \in Q_0 : \mathcal{M}^{d,t}_{Q_0}f(x) > \lambda \}$.
\end{lemma}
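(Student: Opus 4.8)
The plan is to run a Calder\'on--Zygmund stopping-time argument at height \(\lambda\) and reduce everything to a local analysis on the stopping cubes. Applying Lemma~\ref{C-Z_euclidean} to \(|f|\) at level \(\lambda\) (admissible since \(m_{|f|}^{t}(Q_0)\le\lambda\)) produces pairwise disjoint dyadic cubes \(Q_i\) with \(m_{|f|}^{t}(Q_i)>\lambda\) and \(m_{|f|}^{t}(Q_i')\le\lambda\) for the dyadic parent \(Q_i'\). Since each \(Q_i\) is the maximal dyadic cube through a given point on which \(m_{|f|}^{t}\) exceeds \(\lambda\), the super-level set is exactly \(E_\lambda(Q_0)=\bigcup_i Q_i\) up to a null set, whence \(|E_\lambda(Q_0)|=\sum_i|Q_i|\); moreover every dyadic cube witnessing \(\mathcal{M}^{d,t}_{Q_0}f(x)>\lambda\) lies inside the \(Q_i\) containing \(x\). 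As \(K>1\) forces \(E_{K\lambda}(Q_0)\subseteq E_\lambda(Q_0)\), it suffices to estimate \(|E_{K\lambda}(Q_0)\cap Q_i|\) for each \(i\) and sum.

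The key normalization is to subtract the parent median. Put \(c_i=m_{f}^{t}(Q_i')\); property~(vii) of Lemma~\ref{medianprops} together with \(t\le\tfrac12\) and \(m_{|f|}^{t}(Q_i')\le\lambda\) gives \(|c_i|\le m_{|f|}^{t}(Q_i')\le\lambda\). If \(x\in Q_i\) and some dyadic \(Q\ni x\) satisfies \(m_{|f|}^{t}(Q)>K\lambda\), then \(Q\subseteq Q_i\), and by monotonicity and the shift property~(ii),~(v) one has \(m_{|f-c_i|}^{t}(Q)\ge m_{|f|}^{t}(Q)-|c_i|>(K-1)\lambda\). Hence \(E_{K\lambda}(Q_0)\cap Q_i\) is covered by the maximal dyadic subcubes \(R_{i,j}\subseteq Q_i\) with \(m_{|f-c_i|}^{t}(R_{i,j})>(K-1)\lambda\). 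For each of these the definition of the maximal \(t\)-median yields \(|\{x\in R_{i,j}:|f(x)-c_i|>(K-1)\lambda\}|\ge t|R_{i,j}|\), so that, summing over the disjoint \(R_{i,j}\), \(\sum_j|R_{i,j}|\le t^{-1}B_i\) with \(B_i=|\{x\in Q_i:|f(x)-c_i|>(K-1)\lambda\}|\).

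Now I split the stopping cubes. Call \(Q_i\) \emph{good} if \(m_{|f-c_i|}^{s}(Q_i)\le(K-1)\lambda\); then \(B_i\le s|Q_i|\), because \((K-1)\lambda\) is at least an \(s\)-median of \(|f-c_i|\). Thus \(|E_{K\lambda}(Q_0)\cap Q_i|\le t^{-1}B_i\le\frac{s}{t}|Q_i|\le\frac{1}{2K^p}|Q_i|\), where the last inequality is precisely the hypothesis \(s\le t/(2K^p)\); summing over good cubes produces \(\frac{1}{2K^p}|E_\lambda(Q_0)|\). For the \emph{bad} cubes I use the trivial bound \(|E_{K\lambda}(Q_0)\cap Q_i|\le|Q_i|\) together with \(m_{|f-c_i|}^{s}(Q_i)>(K-1)\lambda\) to get \(|Q_i|\le ((K-1)\lambda)^{-p}|Q_i|\bigl(m_{|f-c_i|}^{s}(Q_i)\bigr)^p\). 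It then remains to prove \(\sum_{\mathrm{bad}}|Q_i|\bigl(m_{|f-c_i|}^{s}(Q_i)\bigr)^p\le 2^p\,\norm{f}_{JN^d_{p,0,s}}^p\), which supplies the term \(\frac{2^p}{(K-1)^p}\lambda^{-p}\norm{f}_{JN^d_{p,0,s}}^p\).

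This last display is the heart of the matter and the step I expect to fight with. The difficulty is that the center \(c_i\) is the median over the parent \(Q_i'\) rather than over \(Q_i\) itself, so \(m_{|f-c_i|}^{s}(Q_i)\) is not directly one of the quantities controlled by Lemma~\ref{lemma_med1_eucli}. The plan is to transfer the oscillation to the parents: the cubes \(Q_i\) sharing a common parent \(P\) number at most \(2^n\) and each satisfies \(|Q_i|=2^{-n}|P|\), so the multiplicity cancels the volume factor and one is left with a sum of oscillations of \(f\) over the disjoint collection of parents, to which the definition of \(\norm{f}_{JN^d_{p,0,s}}\) and Lemma~\ref{lemma_med1_eucli} (the source of the constant \(2^p\)) apply. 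The delicate point is to perform this transfer \emph{without degrading the median parameter} \(s\): one must use that on a stopping cube the set \(\{|f|>\lambda\}\) occupies at least a \(t\)-portion (from \(m_{|f|}^{t}(Q_i)>\lambda\)), and that a full dyadic child is a \(2^{-n}\)-portion of its parent, so that the relevant deviation is still registered by an \(s\)-median once \(s\le t\le 2^{-(n+1)}\); this is exactly where the standing assumption \(t\le 2^{-(n+1)}\) is consumed. Properties~(iii),~(v), and~(vii) of Lemma~\ref{medianprops} are the tools for comparing medians of \(f\) and of \(f-c_i\) across the nested pair \(Q_i\subset Q_i'\) and for replacing \(c_i\) by the self-median \(m_f^{t}(Q_i)\) at the cost of the factor~\(2\).
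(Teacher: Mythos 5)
Your overall architecture differs from the paper's in an interesting way: the paper runs Calder\'on--Zygmund stopping times at \emph{both} levels $\lambda$ and $K\lambda$ and splits the $\lambda$-cubes according to whether $|Q_{i,\lambda}|\le 2K^p\bigl|\bigcup_{j\in J_i}Q_{j,K\lambda}\bigr|$, whereas you stop only at level $\lambda$ and split according to whether $m^s_{|f-c_i|}(Q_i)\le(K-1)\lambda$. Your skeleton is sound up to and including the bound $\sum_j|R_{i,j}|\le t^{-1}B_i$ and the good-cube estimate $\tfrac{s}{t}|Q_i|\le\tfrac{1}{2K^p}|Q_i|$, and it would give the stated inequality with the same constants \emph{if} the bad-cube sum were controlled. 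But the step you yourself flag as the one you ``expect to fight with'' is a genuine gap, and your proposed repair does not close it. The obstruction is your choice of center $c_i=m_f^{t}(Q_i')$: the quantity $\sum_{\mathrm{bad}}|Q_i|\bigl(m^s_{|f-c_i|}(Q_i)\bigr)^p$ is not one of the sums controlled by Lemma~\ref{lemma_med1_eucli}, because $c_i$ is a median over the parent rather than over $Q_i$ itself. The transfer-to-parents plan fails at a structural level: the parents $Q_i'$ of the stopping cubes are in general \emph{not} pairwise disjoint (the parent of a small stopping cube can be strictly nested inside the parent of a larger one, sitting in a sibling of that larger cube), so the collection $\{Q_i'\}$ is not admissible in the supremum defining $\norm{f}_{JN^d_{p,0,s}}$, and the multiplicity-times-volume cancellation you invoke does not rescue this. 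The parameter bookkeeping you describe as ``delicate'' is likewise never carried out.

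The fix is a different choice of center, and it is precisely the device the paper uses: take $c_i=m_f^{2^n t}(Q_i)$, the $2^n t$-median over $Q_i$ itself. This center still satisfies $|c_i|\le m_{|f|}^{2^n t}(Q_i)\le m_{|f|}^{t}(Q_i')\le\lambda$ by Lemma~\ref{medianprops}~(vii) (which needs $2^n t\le\tfrac12$, i.e.\ exactly the hypothesis $t\le 2^{-(n+1)}$), property~(iii) applied to $Q_i\subset Q_i'$ with $|Q_i'|=2^n|Q_i|$, and Lemma~\ref{C-Z_euclidean}~(ii); so your lower bound $m^t_{|f-c_i|}(Q)>(K-1)\lambda$ on subcubes witnessing $\mathcal{M}^{d,t}_{Q_0}f>K\lambda$ survives verbatim. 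At the same time $c_i$ is now a self-median of $f$ over $Q_i$ with parameter $2^n t\in[s,\tfrac12]$, so the bad-cube sum is exactly the middle expression in Lemma~\ref{lemma_med1_eucli} (over the pairwise disjoint collection of bad $Q_i$) and is bounded by $2^p\norm{f}_{JN^d_{p,0,s}}^p$ with no further work. With that single substitution your argument closes and is, if anything, slightly leaner than the paper's, since it avoids the second stopping-time decomposition and the union property (x) of Lemma~\ref{medianprops}.
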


\begin{proof}
We apply the Calder\'{o}n-Zygmund decomposition (Lemma~\ref{C-Z_euclidean}) for $f$ in $Q_0$ at levels $\lambda$ and $K\lambda$ to obtain collections of cubes
$\{Q_{i,\lambda}\}_{i\in\mathbb N}$ and $\{Q_{j,K\lambda}\}_{j\in\mathbb N}$ such that
\[
E_\lambda(Q_0) = \bigcup_{i=1}^\infty Q_{i,\lambda} \quad \text{and} \quad E_{K \lambda}(Q_0) = \bigcup_{j=1}^\infty Q_{j,K\lambda} .
\]
Denote
\[
J_i = \left\{ j \in \mathbb{N}: Q_{j,K\lambda} \subset Q_{i,\lambda} \right\}
\]
for every $i \in \mathbb{N}$, and
\[
I = \Bigl\{ i\in \mathbb{N}: \lvert Q_{i,\lambda} \rvert \leq 2 K^{p} \big\lvert \bigcup_{j \in J_i} Q_{j,K \lambda}\big\rvert  \Bigr\} .
\]
Since each $Q_{j,K\lambda}$ is contained in some $Q_{i,\lambda}$, we get the partition
\[
\bigcup_{j=1}^\infty Q_{j,K\lambda} = \bigcup_{i=1}^\infty \bigcup_{j \in J_i} Q_{j,K\lambda} .
\]
By Lemma~\ref{medianprops} (ii), (v), (vii), (iii) and Lemma~\ref{C-Z_euclidean} (ii) in this order, we obtain
\begin{align*}
m_{|f-m_f^{2^n t}(Q_{i,\lambda})|}^{t}(Q_{j,K\lambda}) &\geq m_{|f|}^{t}(Q_{j,K\lambda}) - |m_f^{2^n t}(Q_{i,\lambda})|
\geq m_{|f|}^{t}(Q_{j,K\lambda}) - m_{|f|}^{2^n t}(Q_{i,\lambda}) \\
&\geq m_{|f|}^{t}(Q_{j,K\lambda}) - m_{|f|}^{t}(Q'_{i,\lambda})
\geq K\lambda - \lambda = (K-1) \lambda ,
\end{align*}
where $Q'_{i,\lambda}$ is the parent cube of $Q_{i,\lambda}$.
Since $Q_{j,K\lambda}$ are pairwise disjoint, property (x) of Lemma~\ref{medianprops} implies that
\[
m_{|f-m_f^{2^n t}(Q_{i,\lambda})|}^{t}\big(\bigcup_{j \in J_i} Q_{j,K\lambda}\big) \geq (K-1)\lambda .
\]
By applying Lemma~\ref{medianprops}~(iii), we get
\begin{align*}
\sum_{j \in J_i} |Q_{j,K\lambda}| &\leq |Q_{i,\lambda}| 
\leq  \frac{1}{(K-1)^p \lambda^p} |Q_{i,\lambda}| \biggl( m_{|f-m_f^{2^n t}(Q_{i,\lambda})|}^{t}\big(\bigcup_{j \in J_i} Q_{j,K\lambda}\big) \biggr)^p \\
&\leq \frac{1}{(K-1)^p \lambda^p} |Q_{i,\lambda}| \Bigl(  m_{|f-m_f^{2^n t}(Q_{i,\lambda})|}^{t/2 K^p}(Q_{i,\lambda}) \Bigr)^p
\end{align*}
for $i \in I$.
Hence, by summing over all indices $i \in I$, we obtain
\[
\sum_{i \in I} \sum_{j \in J_i} |Q_{j,K\lambda}| 
\leq \frac{1}{(K-1)^p\lambda^p} \sum_{i \in I} |Q_{i,\lambda}| \Bigl(  m_{|f-m_f^{2^n t}(Q_{i,\lambda})|}^{t/2 K^p}(Q_{i,\lambda})\Bigr)^p 
\leq \frac{2^p}{(K-1)^p} \frac{\norm{f}_{JN^d_{p,0,s}}^p}{\lambda^p}  ,
\]
where in the last inequality we used Lemma~\ref{lemma_med1_eucli} with $t \leq \tfrac{1}{2^{n+1}}$ and $0 < s \leq \tfrac{t}{2 K^p}$.

On the other hand, if $i \notin I$, we have
\[
\sum_{j \in J_i} |Q_{j,K\lambda}| \leq \frac{1}{2K^p} |Q_{i,\lambda}| .
\]
Summing over all indices $i \notin I$, it follows that
\[
\sum_{i \notin I} \sum_{j \in J_i} |Q_{j,K\lambda}| \leq \frac{1}{2K^p} \sum_{i \notin I} |Q_{i,\lambda}| \leq \frac{1}{2K^p} |E_{\lambda}(Q_0)| .
\]
By combining the cases $i\in I$ and $i \notin I$, we conclude that
\begin{align*}
|E_{K \lambda}(Q_0)| = \sum_{i = 1}^\infty \sum_{j \in J_i} |Q_{j,K\lambda}| \leq \frac{2^p}{(K-1)^p} \frac{\norm{f}_{JN^d_{p,0,s}}^p}{\lambda^p} + \frac{1}{2K^p} |E_{\lambda}(Q_0)| .
\end{align*}

\end{proof}

We are ready to prove the John--Nirenberg inequality for $JN^d_{p,0,s}$ which implies that $JN^d_{p,0,s}(Q)$ is contained in $L^{p,\infty}(Q)$ for all cubes $Q \subset \mathbb{R}^n$.

\begin{theorem}
\label{thm_JN_eucli}
Let $0<s \leq \tfrac{1}{2^{n+3}}$ and $s \leq r \leq \frac{1}{2}$.
If $f \in JN^d_{p,0,s}(Q_0)$, then there exists a constant $c=c(p)$ such that for every $\lambda > 0$ we have
\[
|\{ x \in Q_0: |f(x)-m_f^{r}(Q_0)| > \lambda \}| \leq c \frac{\norm{f}_{JN^d_{p,0,s}(Q_0)}^p}{\lambda^p} .
\]
\end{theorem}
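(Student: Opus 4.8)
The plan is to obtain the weak-type bound by iterating the good-$\lambda$ inequality of Lemma~\ref{goodlambd_eucli} along a geometric sequence of levels, after a preliminary reduction that recenters $f$ and passes to the median maximal function.

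\emph{Reduction and domination by the maximal function.} First I would set $g = f - m_f^{r}(Q_0)$. By property~(v) of Lemma~\ref{medianprops} the shift only relabels the constants $c_i$ in Definition~\ref{Def_medJN_eucli}, so $\norm{g}_{JN^d_{p,0,s}(Q_0)} = \norm{f}_{JN^d_{p,0,s}(Q_0)} =: N$, and the set to be estimated is exactly $\{x \in Q_0 : |g(x)| > \lambda\}$. Since $f$ has finite norm, the single-cube collection $\{Q_0\}$ forces $\inf_c m_{|f-c|}^s(Q_0) < \infty$, so $f$ and hence $g$ are finite almost everywhere and the Lebesgue differentiation theorem for medians (Lemma~\ref{leb.diff.medians}) applies to $|g|$. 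Running it over dyadic cubes shrinking to $x$ gives $|g(x)| = \lim_i m_{|g|}^{t}(Q_i) \le \mathcal{M}^{d,t}_{Q_0} g(x)$ for a.e.\ $x$, whence
\[
|\{x \in Q_0 : |g(x)| > \lambda\}| \le |E_\lambda(Q_0)|, \qquad E_\lambda(Q_0) = \{ x\in Q_0: \mathcal{M}^{d,t}_{Q_0} g(x) > \lambda\}.
\]

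\emph{Parameter choice and iteration.} The crux is to pick the auxiliary parameters so that Lemma~\ref{goodlambd_eucli} applies to $g$ and the iteration closes uniformly. I would fix $t = 2^{-(n+1)}$ and $K = 2^{1/p} > 1$, so that $K^p = 2$ and the admissibility threshold $s \le t/(2K^p) = 2^{-(n+3)}$ is exactly the hypothesis on $s$; the condition $s \le r \le \tfrac12$ enters only through the recentering. Taking the base level $\lambda_0 = m_{|g|}^{t}(Q_0)$, Lemma~\ref{goodlambd_eucli} applies at every level $\lambda_k = K^k \lambda_0 \ge \lambda_0$ and gives, with $a_k := |E_{\lambda_k}(Q_0)|$,
\[
a_{k+1} \le \frac{2^p}{(K-1)^p}\frac{N^p}{2^{k}\lambda_0^p} + \frac14\, a_k .
\]
Multiplying by $2^{k+1}$ converts this into $b_{k+1} \le C + \tfrac12 b_k$ for $b_k := 2^k a_k$ and $C = 2^{p+1}(K-1)^{-p} N^p \lambda_0^{-p}$, hence $b_k \le b_0 + 2C$ and therefore $a_k \le 2^{-k}\bigl(|Q_0| + 2C\bigr)$.

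\emph{Starting bound and conclusion.} Using $2^{-k} = (\lambda_0/\lambda_k)^p$, the last estimate reads $|E_{\lambda_k}(Q_0)| \le \lambda_0^p |Q_0|\,\lambda_k^{-p} + 2^{p+2}(K-1)^{-p} N^p \lambda_k^{-p}$. It remains to absorb the first term: applying Lemma~\ref{lemma_med1_eucli} with its parameter equal to $r$ to the single-cube collection $\{Q_0\}$ gives $|Q_0|\bigl(m_{|g|}^{s}(Q_0)\bigr)^p \le 2^p N^p$, and since $s \le t$ property~(i) yields $\lambda_0 = m_{|g|}^{t}(Q_0) \le m_{|g|}^{s}(Q_0)$, so $\lambda_0^p |Q_0| \le 2^p N^p$. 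Thus $|E_{\lambda_k}(Q_0)| \le c(p)\,N^p \lambda_k^{-p}$ along the geometric sequence. For general $\lambda \ge \lambda_0$ I would sandwich $\lambda$ between consecutive $\lambda_k$ and use monotonicity of $E_\lambda(Q_0)$ in $\lambda$, losing only a factor $K^p = 2$; for $\lambda < \lambda_0$ the trivial bound $|E_\lambda(Q_0)| \le |Q_0| \le 2^p N^p \lambda_0^{-p} \le 2^p N^p \lambda^{-p}$ suffices. Combined with the domination from the first step, this is the claimed inequality.

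The only genuinely delicate point is this parameter bookkeeping: one must choose $t$ and $K$ (here $K = 2^{1/p}$) so that the admissibility threshold $t/(2K^p)$ lands precisely at $2^{-(n+3)}$ while keeping $K^p$ bounded, since otherwise the contraction factor $1/(2K^p)$ in the good-$\lambda$ inequality would deteriorate with $p$ and the iteration would fail to close. Everything else is the standard good-$\lambda$ iteration, and the resulting dependence of $c$ on $p$ through $(K-1)^{-p} = (2^{1/p}-1)^{-p}$ is harmless because $c = c(p)$ is permitted.
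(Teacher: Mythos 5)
Your argument is correct and follows essentially the same route as the paper: the same good-$\lambda$ inequality with $t=2^{-(n+1)}$ and $K=2^{1/p}$, iterated along the geometric levels $K^k\lambda_0$, with Lemma~\ref{lemma_med1_eucli} (plus monotonicity in the median parameter) supplying the starting bound; you merely solve the recursion by the substitution $b_k=2^k a_k$ where the paper verifies the ansatz $c_0 N^p(K^k\lambda_0)^{-p}$ by induction. One small slip: finiteness of $\inf_c m^s_{|f-c|}(Q_0)$ alone does not force $f$ to be finite almost everywhere (the set where $|f|=\infty$ could have measure up to $s|Q_0|$), but the containment $\{|g|>\lambda\}\subset E_{\lambda}(Q_0)$ up to a null set that you are after is exactly Lemma~\ref{C-Z_euclidean}~(iii), which you may cite instead.
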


\begin{proof}
Since $f \in JN^d_{p,0,s}(Q_0)$, Lemma~\ref{lemma_med1_eucli} implies that
\[
|Q_0|^\frac{1}{p} m_{|f-m_f^{r}(Q_0)|}^{t}(Q_0) \leq 2 \norm{f}_{JN^d_{p,0,s}} ,
\]
where $t = \frac{1}{2^{n+1}}$ and $s\leq r \leq \frac{1}{2}$.
Therefore, the condition in Lemma~\ref{goodlambd_eucli} holds for $|f-m_f^{r}(Q_0)|$ with the choice
\[
\lambda_0 = \frac{2 \norm{f}_{JN^d_{p,0,s}}}{|Q_0|^\frac{1}{p}} .
\]
For $0 < \lambda \leq \lambda_0$, we have
\begin{align*}
|\{ x \in Q_0: |f(x)-m_f^{r}(Q_0)| > \lambda \}| \leq |Q_0| = 2^p \frac{\norm{f}_{JN^d_{p,0,s}}^p}{\lambda_0^p} \leq 2^p \frac{\norm{f}_{JN^d_{p,0,s}}^p}{\lambda^p} .
\end{align*}

Assume then that $\lambda > \lambda_0$.
Let $K = 2^{\frac1p}$ and choose $N \in \mathbb{N}$ such that
\[
K^N \lambda_0 < \lambda \leq K^{N+1} \lambda_0 .
\]
We have
\[
|\{ x \in Q_0: |f(x)-m_f^{r}(Q_0)| > \lambda \}| 
\leq |\{ x \in Q_0: |f(x)-m_f^{r}(Q_0)| > K^N \lambda_0 \}| \\
\leq |E_{K^N \lambda_0}(Q_0)| ,
\]
where the last inequality follows from Lemma~\ref{C-Z_euclidean}~(iii).
We claim that
\[
|E_{K^m \lambda_0}(Q_0)| \leq c_0 \frac{\norm{f}_{JN^d_{p,0,s}}^p}{(K^{m} \lambda_0)^p} 
\]
for every $m=0,1,\dots,N$, where $c_0 = 2^{p+1} K^p(K-1)^{-p}$.
We prove the claim by induction. First, observe that the claim holds for $m = 0$, since
\[
|E_{ \lambda_0}(Q_0)| \leq |Q_0| = 2^p \frac{ \norm{f}_{JN^d_{p,0,s}}^p}{\lambda_0^p} \leq c_0 \frac{ \norm{f}_{JN^d_{p,0,s}}^p}{\lambda_0^p}.
\]
Assume then that the claim holds for $k \in \{0,1,\dots,N-1\}$, that is,
\[
|E_{K^k \lambda_0}(Q_0)| \leq c_0 \frac{\norm{f}_{JN^d_{p,0,s}}^p}{(K^{k} \lambda_0)^p} .
\]
This together with Lemma~\ref{goodlambd_eucli} for $K^k \lambda_0$ implies the claim for $k+1$:
\begin{align*}
|E_{K^{k+1} \lambda_0}(Q_0)| &\leq \frac{2^p }{(K-1)^p } \frac{\norm{f}_{JN^d_{p,0,s}}^p}{(K^k \lambda_0)^p} + \frac{1}{2K^p} |E_{K^k \lambda_0}(Q_0)| \\
&\leq \frac{2^p }{(K-1)^p } \frac{\norm{f}_{JN^d_{p,0,s}}^p}{(K^k \lambda_0)^p} + \frac{c_0}{2K^p} \frac{\norm{f}_{JN^d_{p,0,s}}^p}{(K^{k} \lambda_0)^p} \\
&= \bigg( \frac{2^p K^p}{(K-1)^p} + \frac{c_0}{2} \bigg) \frac{\norm{f}_{JN^d_{p,0,s}}^p}{(K^{k+1} \lambda_0)^p} 
= c_0 \frac{\norm{f}_{JN^d_{p,0,s}}^p}{(K^{k+1} \lambda_0)^p} .
\end{align*}
Hence, the claim holds for $k+1$.

We conclude that
\[
|\{ x \in Q_0: |f(x)-m_f^{r}(Q_0)| > \lambda \}| 
\leq  c_0 \frac{\norm{f}_{JN^d_{p,0,s}}^p}{(K^N \lambda_0)^p} 
= c_0 K^p \frac{\norm{f}_{JN^d_{p,0,s}}^p}{(K^{N+1} \lambda_0)^p} 
\leq c \frac{\norm{f}_{JN^d_{p,0,s}}^p}{\lambda^p} ,
\]
with $c = c_0  K^p = 2^{p+1} K^{2p}(K-1)^{-p} =2^{p+3}(2^\frac{1}{p} -1)^{-p}$.
\end{proof}

As an application of the John--Nirenberg inequality (Theorem~\ref{thm_JN_eucli}), we discuss the connection between the John--Nirenberg spaces with medians and integral averages.

\begin{definition}
\label{Def_JN^dp}
Let $Q_0 \subset \mathbb{R}^n$ be a cube and $1<p<\infty$. We say that $f \in L^1(Q_0)$ belongs to the dyadic John--Nirenberg space $JN^d_{p}(Q_0)$ if
\[
\norm{f}_{JN^d_{p}(Q_0)}^p = \sup \sum_{i=1}^\infty |Q_i| \biggl( \dashint_{Q_i} |f-f_{Q_i}| \dx \biggr)^p < \infty,
\]
where the supremum is taken over countable collections  $\{Q_i\}_{i\in\mathbb N}$ of pairwise disjoint dyadic subcubes of $Q_0$.
\end{definition}

As a corollary of Theorem~\ref{thm_JN_eucli}, the median-type dyadic John--Nirenberg space coincides with the dyadic John--Nirenberg space with integral averages.
In particular, it follows that all results for the dyadic John--Nirenberg spaces with integral averages also hold for the median-type dyadic John--Nirenberg spaces and vice versa.
We note that Theorem~\ref{thm_JN_eucli} also holds for the John--Nirenberg space over all subcubes instead of dyadic subcubes of $Q_0$.
Thus, the corollary below also holds for the standard John--Nirenberg spaces.

\begin{corollary}
\label{equivalence_euclidean}
Let $1<p<\infty$ and $0< s \leq \frac{1}{2^{n+3}}$.
It holds that
\[
s \norm{f}_{JN^d_{p,0,s}(Q_0)} \leq \norm{f}_{JN^d_{p}(Q_0)} \leq \frac{2cp}{p-1}  \norm{f}_{JN^d_{p,0,s}(Q_0)},
\]
where $c$ is the constant from Theorem~\ref{thm_JN_eucli}.
\end{corollary}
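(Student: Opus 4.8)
The plan is to prove the two inequalities separately. The left inequality is immediate from the pointwise comparison of medians with integral averages. For any admissible collection $\{Q_i\}$ of pairwise disjoint dyadic subcubes and the choice $c_i=f_{Q_i}$, Lemma~\ref{medianprops}~(ix) with exponent $1$ gives
\[
\inf_{c_i\in\mathbb R} m_{|f-c_i|}^s(Q_i) \le m_{|f-f_{Q_i}|}^s(Q_i) \le s^{-1}\dashint_{Q_i}|f-f_{Q_i}|\dx .
\]
Raising to the power $p$, multiplying by $|Q_i|$, summing and taking the supremum yields $s\norm{f}_{JN^d_{p,0,s}(Q_0)}\le\norm{f}_{JN^d_p(Q_0)}$; this requires no integrability beyond what makes the right-hand side finite.

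For the right inequality, first note that finiteness of $\norm{f}_{JN^d_{p,0,s}(Q_0)}$ together with Theorem~\ref{thm_JN_eucli} places $f-m_f^r(Q_0)$ in $L^{p,\infty}(Q_0)\subset L^1(Q_0)$, so $f\in L^1(Q_0)$ and all integral averages below are defined. The plan is to estimate, for each cube $Q_i$ in a given admissible collection, the quantity $\dashint_{Q_i}|f-f_{Q_i}|\dx$ by a median oscillation. The triangle inequality gives $\dashint_{Q_i}|f-f_{Q_i}|\dx\le 2\dashint_{Q_i}|f-m_f^r(Q_i)|\dx$, and I then write the latter via the layer-cake formula
\[
\dashint_{Q_i}|f-m_f^r(Q_i)|\dx=\frac{1}{|Q_i|}\int_0^\infty\bigl|\{x\in Q_i:|f(x)-m_f^r(Q_i)|>\lambda\}\bigr|\dla .
\]
Writing $a_i=\norm{f}_{JN^d_{p,0,s}(Q_i)}$ and applying Theorem~\ref{thm_JN_eucli} on $Q_i$ (valid since $0<s\le 2^{-(n+3)}$ and $s\le r\le\tfrac12$), the distribution function is bounded by $\min\{|Q_i|,\,c\,a_i^p\lambda^{-p}\}$, using the trivial bound below the crossover level $\lambda_i=c^{1/p}a_i|Q_i|^{-1/p}$ and the weak-type bound above it. Splitting the integral at $\lambda_i$ and computing the two elementary integrals gives
\[
\dashint_{Q_i}|f-f_{Q_i}|\dx\le \frac{2c^{1/p}p}{p-1}\,a_i\,|Q_i|^{-1/p} .
\]

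It remains to sum. Raising the last estimate to the power $p$ and multiplying by $|Q_i|$ cancels the volume factor and leaves $\bigl(\tfrac{2c^{1/p}p}{p-1}\bigr)^p a_i^p$. The decisive point is the superadditivity of the local seminorms over the disjoint cubes: for each $i$ one may choose a collection of disjoint dyadic subcubes of $Q_i$ nearly attaining $a_i^p$, and since the union of these collections is an admissible collection for $Q_0$, one obtains $\sum_i a_i^p\le\norm{f}_{JN^d_{p,0,s}(Q_0)}^p$. Hence
\[
\sum_i |Q_i|\Bigl(\dashint_{Q_i}|f-f_{Q_i}|\dx\Bigr)^p\le\Bigl(\tfrac{2c^{1/p}p}{p-1}\Bigr)^p\sum_i a_i^p\le\Bigl(\tfrac{2c^{1/p}p}{p-1}\Bigr)^p\norm{f}_{JN^d_{p,0,s}(Q_0)}^p .
\]
Taking the supremum over admissible collections and using $c\ge1$, so that $c^{1/p}\le c$, gives the claimed bound with constant $\tfrac{2cp}{p-1}$. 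I expect the main obstacle to be organizing the per-cube $L^{p,\infty}$-to-$L^1$ passage with the correct crossover level so that the volume weights cancel, and verifying the superadditivity step carefully; without the latter, the naive bound $a_i\le\norm{f}_{JN^d_{p,0,s}(Q_0)}$ would produce a divergent sum over the possibly infinite collection.
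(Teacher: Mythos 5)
Your proposal is correct and follows essentially the same route as the paper's proof: the first inequality via Lemma~\ref{medianprops}~(ix), and the second via Cavalieri's principle combined with Theorem~\ref{thm_JN_eucli} on each $Q_i$, followed by the superadditivity $\sum_i \norm{f}_{JN^d_{p,0,s}(Q_i)}^p \le \norm{f}_{JN^d_{p,0,s}(Q_0)}^p$ (which the paper uses implicitly and you rightly justify by merging near-optimal subcollections). The only differences are cosmetic: your choice of crossover level and the point at which you insert the factor $2$ from the triangle inequality.
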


\begin{proof}
Let $\{Q_i\}_{i\in\mathbb N}$ be a collection of pairwise disjoint dyadic subcubes of $Q_0$.
The first inequality follows in a straightforward manner from Lemma~\ref{medianprops}~(ix).
For the second inequality, we use Cavalieri's principle together with Theorem~\ref{thm_JN_eucli} to obtain
\begin{align*}
\int_{Q_i}& |f-m_f^{s}(Q_i)| \dx 
= \int_0^\infty |\{ x \in Q_i: |f-m_f^{s}(Q_i)| > \lambda \}| \dla \\
&\leq \int_{|Q_i|^{-\frac{1}{p}} \norm{f}_{JN^d_{p,0,s}(Q_i)}}^\infty c \lambda^{-p} \norm{f}_{JN^d_{p,0,s}(Q_i)}^p \dla  
+ \int_0^{|Q_i|^{-\frac{1}{p}} \norm{f}_{JN^d_{p,0,s}(Q_i)}} |Q_i| \dla \\
&= \frac{c}{p-1} |Q_i|^{1-\frac{1}{p}} \norm{f}_{JN^d_{p,0,s}(Q_i)} + |Q_i|^{1-\frac{1}{p}} \norm{f}_{JN^d_{p,0,s}(Q_i)} \\
&\leq \frac{cp}{p-1} |Q_i|^{1-\frac{1}{p}} \norm{f}_{JN^d_{p,0,s}(Q_i)} ,
\end{align*}
where  $c$ is the constant from Theorem~\ref{thm_JN_eucli}.
This implies that
\begin{align*}
\sum_{i=1}^\infty |Q_i| \biggl(\inf_{c_i} \dashint_{Q_i}|f-c_i| \dx \biggr)^p 
&\leq \sum_{i=1}^\infty |Q_i| \biggl( \dashint_{Q_i} |f-m_f^{s}(Q_i)| \dx \biggr)^p \\
&\leq \lp \frac{cp}{p-1} \rp^p \sum_{i=1}^\infty \norm{f}_{JN^d_{p,0,s}(Q_i)}^p 
\leq \lp \frac{cp}{p-1} \rp^p \norm{f}_{JN^d_{p,0,s}(Q_0)}^p .
\end{align*}
Thus, it follows that
\[
\norm{f}_{JN^d_{p}(Q_0)} \leq \frac{2cp}{p-1}  \norm{f}_{JN^d_{p,0,s}(Q_0)}.
\]
\end{proof}

\section{The dyadic maximal function on $JN^d_p$}

In this section, we discuss the behavior of the Hardy--Littlewood maximal function on the John--Nirenberg space with integral averages as in Definition \ref{Def_JN^dp}.

\begin{definition}
Let $Q_0 \subset \mathbb{R}^n$ be a cube and assume that $f \in L^1(Q_0)$.
The dyadic maximal function of $f$ is defined by
\[
M^d_{Q_0}f(x) = \sup \dashint_{Q} |f(y)| \dy ,
\]
where the supremum is taken over all dyadic subcubes $Q\in\mathcal D(Q_0)$ with $x \in Q$.
\end{definition}

Let $f,g\in L^1(Q_0)$ and $x\in Q_0$. 
Using the definition, it is easy to show
that $M^d_{Q_0}f(x)\ge 0$, 
\[
M^d_{Q_0}(f+g)(x)\le M^d_{Q_0}f(x)+M^d_{Q_0}g(x),
\] 
and
\[
M^d_{Q_0}(af)(x)=\lvert a\rvert M^d_{Q_0}f(x)
\]
for every $a\in\mathbb R$.

The Calder\'{o}n-Zygmund decomposition with integral averages implies that
the dyadic maximal function satisfies the weak type estimate
\begin{equation}
\label{weaktype}
|\{x\in Q_0:M^d_{Q_0}f(x)>\lambda\}|
\le\frac1\lambda\int_{Q_0}|f(x)|\,dx
\end{equation}
for every $\lambda>0$ and is a bounded operator on $L^p(Q_0)$ with $1<p\le \infty$.
Moreover, the dyadic maximal operator is bounded on $BMO(Q_0)$; see \cite{bennett}.
We show that the dyadic maximal operator is bounded on the dyadic John--Nirenberg space.

\begin{theorem}\label{thm_mfbound}
Let $1 < p <\infty$ and assume that  $f \in JN^d_{p}(Q_0)$. Then there exists a constant $c = c(n,p)$ such that
\[
\lVert M^d_{Q_0}f\rVert_{JN^d_{p}(Q_0)} \leq c \norm{f}_{JN^d_{p}(Q_0)} .
\]

\end{theorem}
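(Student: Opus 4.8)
The plan is to exploit the nesting of dyadic cubes to \emph{localise} the maximal function on each cube of a competing collection, and then to convert the resulting pointwise bound into an $L^1$-average estimate by means of the John--Nirenberg inequality. The reason John--Nirenberg is indispensable here is that the dyadic maximal operator is \emph{not} bounded on $L^1$, so the naive estimate fails; it is precisely the weak-$L^p$ information supplied by Theorem~\ref{thm_JN_eucli} (transported to integral averages via Corollary~\ref{equivalence_euclidean}) that makes the argument close.

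I would fix a countable family $\{Q_i\}$ of pairwise disjoint dyadic subcubes of $Q_0$ and analyse a single $Q=Q_i$. Since two dyadic cubes are either nested or have disjoint interiors, for $x\in Q$ the dyadic cubes containing $x$ are exactly the ancestors of $Q$ (on which the average of $|f|$ is independent of $x$) together with the dyadic subcubes of $Q$ containing $x$. Hence $M^d_{Q_0}f(x)=\max\{M^d_Q f(x),\,c_Q\}$ on $Q$, where $c_Q=\sup_{Q\subseteq R\subseteq Q_0}\dashint_R|f|$ is constant on $Q$ and $M^d_Q$ is the local dyadic maximal operator. Using $\dashint_Q|g-g_Q|\le 2\dashint_Q|g-c|$ with $c=c_Q$, together with the identity $|M^d_{Q_0}f-c_Q|=(M^d_Qf-c_Q)_+$, I reduce matters to bounding $\dashint_Q(M^d_Qf-c_Q)_+$.

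Next I would eliminate $c_Q$. Because $c_Q\ge\dashint_Q|f|$ and $\sup_{S}\dashint_S(|f|-\dashint_Q|f|)$ over dyadic $S\subseteq Q$ containing $x$ is nonnegative (take $S=Q$) and dominated by the local maximal function of the oscillation, one has the pointwise bound $(M^d_Qf-c_Q)_+\le M^d_Q\phi_Q$ with $\phi_Q=|f|-\dashint_Q|f|$. Subtracting a constant leaves oscillations unchanged and $\operatorname{osc}(|f|)\le 2\operatorname{osc}(f)$ on every subcube, so $\phi_Q\in JN^d_p(Q)$ with $\norm{\phi_Q}_{JN^d_p(Q)}\le 2\norm{f}_{JN^d_p(Q)}$ and $(\phi_Q)_Q=0$.

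The crux is then the estimate of $\dashint_Q M^d_Q\phi_Q$. Combining Theorem~\ref{thm_JN_eucli} with Corollary~\ref{equivalence_euclidean} gives $JN^d_p(Q)\hookrightarrow L^{p,\infty}(Q)$, so $\norm{\phi_Q}_{L^{p,\infty}(Q)}\le c\,\norm{f}_{JN^d_p(Q)}$ (using $(\phi_Q)_Q=0$). Since for $p>1$ the dyadic maximal operator is bounded on $L^{p,\infty}(Q)$ — a standard consequence of the weak type estimate~\eqref{weaktype} and the trivial $L^\infty$ bound — and the finite-measure embedding gives $\dashint_Q|g|\le\frac{p}{p-1}|Q|^{-1/p}\norm{g}_{L^{p,\infty}(Q)}$, I obtain the scale-correct bound $\dashint_Q|M^d_{Q_0}f-(M^d_{Q_0}f)_Q|\le C(n,p)|Q|^{-1/p}\norm{f}_{JN^d_p(Q)}$. (The same reasoning on $Q_0$ shows $M^d_{Q_0}f\in L^{p,\infty}(Q_0)\subset L^1(Q_0)$, so the seminorm is well defined.) Raising to the power $p$, multiplying by $|Q_i|$ and summing cancels the volume factors and leaves $\sum_i\norm{f}_{JN^d_p(Q_i)}^p$; superadditivity of the $JN^d_p$ functional over the disjoint family — a near-optimal collection inside each $Q_i$ unions to an admissible collection for $Q_0$ — yields $\sum_i\norm{f}_{JN^d_p(Q_i)}^p\le\norm{f}_{JN^d_p(Q_0)}^p$, and taking the supremum over $\{Q_i\}$ finishes the proof. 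I expect the main obstacle to be exactly this passage from the pointwise/weak bound to the $L^1$-average with the correct power of $|Q_i|$: a bare $L^1$ bound on the maximal function is unavailable, and it is the John--Nirenberg inequality that provides the integrability needed to close the estimate.
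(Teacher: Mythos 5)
Your proposal is correct and follows essentially the same route as the paper's proof: localize $M^d_{Q_0}f$ on each $Q_i$ via dyadic nesting (your decomposition $M^d_{Q_0}f=\max\{M^d_{Q_i}f,c_{Q_i}\}$ is exactly the paper's splitting into $E_i$ and its complement, where the constant $M_i$ plays the role of your $c_{Q_i}$), reduce to the dyadic maximal function of the oscillation, obtain a weak-$L^p$ bound from the John--Nirenberg inequality, integrate via Cavalieri/Kolmogorov to get the average bound $|Q_i|^{-1/p}\norm{f}_{JN^d_p(Q_i)}$, and sum using superadditivity of the $JN^d_p$ functional. The only notable difference is that the paper quotes the John--Nirenberg lemma for $M^d_{Q_i}(f-f_{Q_i})$ directly from the literature, whereas you assemble the same weak-type estimate from Theorem~\ref{thm_JN_eucli}, Corollary~\ref{equivalence_euclidean} and the $L^{p,\infty}$-boundedness of the dyadic maximal operator, which is a legitimate (if slightly more laborious) substitute.
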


\begin{proof}

Let $\{Q_i\}_{i\in\mathbb N}$ be a collection of pairwise disjoint dyadic subcubes of $Q_0$.
Denote
\[
E_i = \{ x \in Q_i : M^d_{Q_0}f(x) = M^d_{Q_i}f(x) \},\quad i\in\mathbb N .
\]
For $x \in Q_i \setminus E_i$, the supremum in the definition of $M^d_{Q_0}f(x)$ is attained in a dyadic cube $Q_x \ni x$ that intersects $Q_0 \setminus Q_i$.
Since both $Q_x$ and $Q_i$ are dyadic subcubes of $Q_0$ and $x \in Q_i \cap Q_x$, it follows that $Q_i \subset Q_x$.
Since $Q_i \subset Q_x$ for every $x \in Q_i \setminus E_i$, the cube $Q_x$ for which the supremum in the maximal function is attained is the same cube for every $x \in Q_i \setminus E_i$.
Thus, for every $i\in\mathbb N$, there exists a constant $M_i$ such that $M^d_{Q_0}f(x)=M_i$ for every $x \in Q_i \setminus E_i$.
We observe that
\begin{align*}
M^d_{Q_i}f - (M^d_{Q_0}f)_{Q_i} 
\leq M^d_{Q_i}f - |f_{Q_i}|
= M^d_{Q_i}f - M^d_{Q_i}(f_{Q_i}) 
\leq M^d_{Q_i}(f-f_{Q_i}) .
\end{align*}
This implies that
\begin{align*}
\frac{1}{2} \int_{Q_i}&|M^d_{Q_0}f - ( M^d_{Q_0}f )_{Q_i} |\dx
= \int_{Q_i} \big( M^d_{Q_0}f - (M^d_{Q_0}f)_{Q_i} \big)^+\dx\\
&= \int_{E_i} \big( M^d_{Q_0}f - (M^d_{Q_0}f)_{Q_i} \big)^+\dx
+ \int_{Q_i \setminus E_i} \big( M^d_{Q_0}f - (M^d_{Q_0}f)_{Q_i} \big)^+\dx \\
&= \int_{E_i}  \big( M^d_{Q_i}f - (M^d_{Q_0}f)_{Q_i} \big)^+\dx  
+ \int_{Q_i \setminus E_i} \big( M_i - (M^d_{Q_0}f)_{Q_i} \big)^+\dx \\
&\leq \int_{E_i}  M^d_{Q_i}(f-f_{Q_i})\dx
 \leq \int_{Q_i}  M^d_{Q_i}(f-f_{Q_i})\dx ,
\end{align*}
where in the second last inequality we also used $M_i \leq (M^d_{Q_0}f)_{Q_i}$, $i\in\mathbb N$, which follows from $M_i\leq  M^d_{Q_0}f(x)$ for every $x \in Q_i$.
From the proof of the John--Nirenberg lemma~\cites[pp.~11--13]{aalto}[p.~7]{kinnunen}, we see that
\[
|\{ x \in Q_i : M^d_{Q_i}(f-f_{Q_i})(x) > \lambda \}| \leq c \frac{\norm{f}_{JN^d_{p}(Q_i)}^p}{\lambda^p} 
\]
for some constant $c = c(n,p)$.
Applying this together with Cavalieri's principle, we obtain
\begin{align*}
\int_{Q_i}& M^d_{Q_i}(f-f_{Q_i}) \dx = \int_0^\infty |\{ x \in Q_i: M^d_{Q_i}(f-f_{Q_i})(x) > \lambda \}| \dla \\
&\leq \int_{|Q_i|^{-\frac{1}{p}} \norm{f}_{JN^d_{p}(Q_i)}}^\infty c \lambda^{-p} \norm{f}_{JN^d_{p}(Q_i)}^p \dla 
+ \int_0^{|Q_i|^{-\frac{1}{p}} \norm{f}_{JN^d_{p}(Q_i)}} |Q_i| \dla \\
&= \frac{c}{p-1} |Q_i|^{1-\frac{1}{p}} \norm{f}_{JN^d_{p}(Q_i)} + |Q_i|^{1-\frac{1}{p}} \norm{f}_{JN^d_{p}(Q_i)} 
\leq \frac{cp}{p-1} |Q_i|^{1-\frac{1}{p}} \norm{f}_{JN^d_{p}(Q_i)}.
\end{align*}
Therefore, we can conclude that
\begin{align*}
\sum_{i=1}^\infty |Q_i| \biggl( \dashint_{Q_i}& |M^d_{Q_0}f - (M^d_{Q_0}f)_{Q_i} | \dx \biggr)^p 
\leq  2^p \sum_{i=1}^\infty |Q_i| \biggl( \dashint_{Q_i} M^d_{Q_0}(f-f_{Q_i})  \dx \biggr)^p \\
&\leq \lp \frac{2cp}{p-1} \rp^p \sum_{i=1}^\infty \norm{f}_{JN^d_{p}(Q_i)}^p 
\leq \lp \frac{2cp}{p-1} \rp^p \norm{f}_{JN^d_{p}(Q_0)}^p .
\end{align*}
Taking the supremum over all collections of $\{Q_i\}_{i\in\mathbb N}$, we get
\[
\lVert M^d_{Q_0}f\rVert_{JN^d_{p}(Q_0)}^p \leq \lp \frac{2cp}{p-1} \rp^p \norm{f}_{JN^d_{p}(Q_0)}^p .
\]

\end{proof}

By a similar argument as in the proof of Theorem \ref{thm_mfbound}, we obtain an $L^1$ result for the dyadic maximal function.
The weak type estimate \eqref{weaktype} is used instead of the John--Nirenberg inequality in the argument.

\begin{theorem}
\label{construct_dyaJNp}
Let $1 < p <\infty$ and assume that $f \in L^1(Q_0)$. Then there exists a constant $c = c(p)$ such that
\[
\lVert(M^d_{Q_0}f)^\frac{1}{p}\rVert_{JN^d_{p}(Q_0)}^p \leq c \norm{f}_{L^1(Q_0)} .
\]

\end{theorem}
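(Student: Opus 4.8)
The plan is to mirror the proof of Theorem~\ref{thm_mfbound}, replacing the John--Nirenberg inequality with the weak type estimate~\eqref{weaktype}. Write $g = (M^d_{Q_0}f)^{\frac1p}$ and fix a collection $\{Q_i\}_{i\in\mathbb N}$ of pairwise disjoint dyadic subcubes of $Q_0$; note first that $g\in L^1(Q_0)$, since \eqref{weaktype} places $M^d_{Q_0}f$ in weak $L^1$ and hence $g$ in $L^1(Q_0)$ for $p>1$, so the norm is well defined. As before, for each $i$ set $E_i = \{x\in Q_i : M^d_{Q_0}f(x) = M^d_{Q_i}f(x)\}$ and let $M_i$ be the common value of $M^d_{Q_0}f$ on $Q_i\setminus E_i$. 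The same argument as in Theorem~\ref{thm_mfbound} shows that $M^d_{Q_0}f(x) = \max\{M^d_{Q_i}f(x), M_i\}$ for $x\in Q_i$, that $M^d_{Q_i}f \le M_i$ on $Q_i\setminus E_i$, and that $M_i \le M^d_{Q_0}f(x)$ for every $x\in Q_i$. In particular $g \ge M_i^{\frac1p}$ on $Q_i$, whence $g_{Q_i} \ge M_i^{\frac1p}$.

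First I would establish the oscillation estimate
\[
\tfrac12 \int_{Q_i} \abs{g - g_{Q_i}} \dx = \int_{Q_i} \bigl(g - g_{Q_i}\bigr)^+ \dx \le \int_{Q_i} \bigl((M^d_{Q_i}f - M_i)^+\bigr)^{\frac1p} \dx .
\]
The equality holds since $\int_{Q_i}(g - g_{Q_i})\dx = 0$. For the inequality, the integrand on the left vanishes on $Q_i\setminus E_i$ because there $g = M_i^{\frac1p}\le g_{Q_i}$; on $E_i$ one has $g = (M^d_{Q_i}f)^{\frac1p}$, and using $g_{Q_i}\ge M_i^{\frac1p}$ together with the elementary subadditivity inequality $a^{\frac1p} - b^{\frac1p} \le (a-b)^{\frac1p}$, valid for $a\ge b\ge0$ and $p>1$, one gets $(g - g_{Q_i})^+ \le (M^d_{Q_i}f)^{\frac1p} - M_i^{\frac1p} \le (M^d_{Q_i}f - M_i)^{\frac1p}$.

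Next I would bound the right-hand side by Cavalieri's principle and the weak type estimate~\eqref{weaktype} applied on $Q_i$. Writing $\phi = (M^d_{Q_i}f - M_i)^+$ and using $M_i\ge0$,
\[
\int_{Q_i}\phi^{\frac1p}\dx = \int_0^\infty \abs{\{x\in Q_i : M^d_{Q_i}f(x) > M_i + \lambda^p\}} \dla \le \int_0^\infty \min\Bigl\{ \abs{Q_i},\ \lambda^{-p}\!\int_{Q_i}\abs{f}\dx \Bigr\} \dla .
\]
Splitting the last integral at $\lambda = \bigl(\abs{Q_i}^{-1}\int_{Q_i}\abs{f}\bigr)^{\frac1p}$ and evaluating the two elementary pieces yields
\[
\int_{Q_i}\phi^{\frac1p}\dx \le \frac{p}{p-1}\,\abs{Q_i}^{1-\frac1p}\Bigl(\int_{Q_i}\abs{f}\dx\Bigr)^{\frac1p}.
\]
Combining with the oscillation estimate gives $\dashint_{Q_i}\abs{g - g_{Q_i}}\dx \le \frac{2p}{p-1}\abs{Q_i}^{-\frac1p}\bigl(\int_{Q_i}\abs{f}\bigr)^{\frac1p}$, so that $\abs{Q_i}\bigl(\dashint_{Q_i}\abs{g-g_{Q_i}}\dx\bigr)^p \le \bigl(\tfrac{2p}{p-1}\bigr)^p\int_{Q_i}\abs{f}\dx$. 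Summing over $i$ and using the pairwise disjointness of the cubes together with $\bigcup_i Q_i \subset Q_0$, then taking the supremum over all such collections, produces the claim with $c = \bigl(\tfrac{2p}{p-1}\bigr)^p$.

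The routine parts are the two elementary one-variable integrations and the summation. The only genuinely new point compared with Theorem~\ref{thm_mfbound} is the oscillation estimate: the subadditivity inequality $a^{\frac1p} - b^{\frac1p}\le(a-b)^{\frac1p}$ is exactly what converts the $\tfrac1p$-th power of the maximal function into a quantity controlled by the $L^1$ weak type bound, in place of the John--Nirenberg inequality used previously. I expect that step, and the correct choice of the comparison constant $M_i^{\frac1p}$ (so that the positive part is supported on $E_i$ and $g_{Q_i}$ can be discarded from below), to be the main thing to get right.
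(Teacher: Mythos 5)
Your proof is correct and follows essentially the same strategy as the paper's: the $E_i$/$M_i$ decomposition inherited from Theorem~\ref{thm_mfbound}, the subadditivity $a^{1/p}-b^{1/p}\le (a-b)^{1/p}$ to reduce the oscillation of $(M^d_{Q_0}f)^{1/p}$ to a quantity controlled by the maximal function, and Cavalieri's principle combined with the weak type estimate~\eqref{weaktype}. The only (harmless) difference is that you compare against $M_i^{1/p}$ and apply~\eqref{weaktype} to $M^d_{Q_i}f$ directly, whereas the paper compares against $\bigl[(M^d_{Q_0}f)^{1/p}\bigr]_{Q_i}$, passes through $|f_{Q_i}|$ to reach $M^d_{Q_i}(f-f_{Q_i})$, and applies~\eqref{weaktype} to that function at the cost of the factor $\lVert f-f_{Q_i}\rVert_{L^1(Q_i)}\le 2\lVert f\rVert_{L^1(Q_i)}$ --- so your route even gives a marginally smaller constant.
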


\begin{proof}
We use the same notation as in the proof of Theorem \ref{thm_mfbound}.
Analogously, we observe that
\begin{align*}
M^d_{Q_i}f - \big(\big[(M^d_{Q_0}f)^\frac{1}{p}\big]_{Q_i}\big)^p 
\leq M^d_{Q_i}f - |f_{Q_i}|
= M^d_{Q_i}f - M^d_{Q_i}(f_{Q_i})
\leq M^d_{Q_i}(f-f_{Q_i}) ,
\end{align*}
since $|f_{Q_i}| \leq M^d_{Q_0}f(x)$ for every $x \in Q_i$.
This implies
\begin{align*}
\frac{1}{2} \int_{Q_i}& \big|(M^d_{Q_0}f)^\frac{1}{p} - \big[(M^d_{Q_0}f)^\frac{1}{p}\big]_{Q_i} \big|\dx
= \int_{Q_i} \big( (M^d_{Q_0}f)^\frac{1}{p} - \big[(M^d_{Q_0}f)^\frac{1}{p}\big]_{Q_i} \big)^+ \dx\\
&\leq \int_{Q_i} \big( M^d_{Q_0}f - \big(\big[(M^d_{Q_0}f)^\frac{1}{p}\big]_{Q_i}\big)^p \big)^\frac{1}{p}_+ \dx\\
&= \int_{E_i} \big( M^d_{Q_0}f - \big(\big[(M^d_{Q_0}f)^\frac{1}{p}\big]_{Q_i}\big)^p \big)^\frac{1}{p}_+ \dx
 + \int_{Q_i \setminus E_i} \big( M^d_{Q_0}f - \big(\big[(M^d_{Q_0}f)^\frac{1}{p}\big]_{Q_i}\big)^p \big)^\frac{1}{p}_+\dx \\
&= \int_{E_i}  \big( M^d_{Q_i}f - \big(\big[(M^d_{Q_0}f)^\frac{1}{p}\big]_{Q_i}\big)^p \big)^\frac{1}{p}_+\dx
 + \int_{Q_i \setminus E_i} \big( M_i - \big(\big[(M^d_{Q_0}f)^\frac{1}{p}\big]_{Q_i}\big)^p \big)^\frac{1}{p}_+\dx \\
&\leq \int_{E_i}  \big[ M^d_{Q_i}(f-f_{Q_i}) \big]^\frac{1}{p}\dx
 \leq \int_{Q_i} \big[ M^d_{Q_i}(f-f_{Q_i}) \big]^\frac{1}{p}\dx ,
\end{align*}
where in the second last inequality we also used
\[
M_i \leq \big(\big[(M^d_{Q_0}f)^\frac{1}{p}\big]_{Q_i}\big)^p,
\quad i\in\mathbb N .
\]
Applying Cavalieri's principle together with the weak type estimate \eqref{weaktype} for the dyadic maximal operator, we obtain
\begin{align*}
\int_{Q_i}& \big[ M^d_{Q_i}(f-f_{Q_i}) \big]^\frac{1}{p} \dx 
= \frac{1}{p} \int_0^\infty \lambda^{\frac{1}{p}-1} |\{ x \in Q_i: M^d_{Q_i}(f-f_{Q_i})(x) > \lambda \}| \dla \\
&\leq \frac{1}{p} \int_{\lVert f - f_{Q_i}\rVert_{L^1(Q_i)} / |Q_i|}^\infty \lambda^{\frac{1}{p}-2} \lVert f - f_{Q_i}\rVert_{L^1(Q_i)} \dla 
 + \frac{1}{p} \int_0^{\lVert f - f_{Q_i}\rVert_{L^1(Q_i)} / |Q_i|} \lambda^{\frac{1}{p}-1} |Q_i| \dla \\
&= \frac{1}{p-1} |Q_i|^{1-\frac{1}{p}} \lVert f - f_{Q_i}\rVert_{L^1(Q_i)}^\frac{1}{p} +  |Q_i|^{1-\frac{1}{p}}\lVert f - f_{Q_i}\rVert_{L^1(Q_i)}^\frac{1}{p} \\
&= \frac{p}{p-1} |Q_i|^{1-\frac{1}{p}} \lVert f - f_{Q_i}\lVert_{L^1(Q_i)}^\frac{1}{p}
\leq 2^\frac{1}{p} \frac{ p}{p-1} |Q_i|^{1-\frac{1}{p}} \norm{f}_{L^1(Q_i)}^\frac{1}{p}.
\end{align*}
Therefore, we can conclude that
\begin{align*}
\sum_{i=1}^\infty |Q_i|\biggl( \dashint_{Q_i} \big|( & M^d_{Q_0}f)^\frac{1}{p} - \big[(M^d_{Q_0}f)^\frac{1}{p}\big]_{Q_i} \big| \dx \biggr)^p
\leq  2^p \sum_{i=1}^\infty |Q_i| \biggl( \dashint_{Q_i} \big[ M^d_{Q_i}(f-f_{Q_i}) \big]^\frac{1}{p}  \dx\biggr)^p \\
&\leq 2^{p+1} \lp \frac{p}{p-1} \rp^p \sum_{i=1}^\infty \norm{f}_{L^1(Q_i)}
\leq 2^{p+1} \lp \frac{p}{p-1} \rp^p \norm{f}_{L^1(Q_0)} .
\end{align*}
Taking the supremum over all collections of $\{Q_i\}_{i\in\mathbb N}$, we obtain
\[
\lVert(M^d_{Q_0}f)^\frac{1}{p}\rVert_{JN^d_{p}(Q_0)}^p \leq 2^{p+1} \lp \frac{p}{p-1} \rp^p \norm{f}_{L^1(Q_0)} .
\]
\end{proof}

\begin{corollary}
Let $1<p<\infty$ and assume that  $f \in L^1(Q_0) \setminus L \log^+ L(Q_0)$. Then $(M^d_{Q_0}f)^\frac{1}{p} \in JN^d_{p}(Q_0) \setminus L^p(Q_0)$.
\end{corollary}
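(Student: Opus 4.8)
The plan is to treat the two memberships separately: the claim that $(M^d_{Q_0}f)^{1/p}$ lies in $JN^d_p(Q_0)$ is an immediate consequence of Theorem~\ref{construct_dyaJNp}, whereas the claim that it fails to lie in $L^p(Q_0)$ is the substantive part and is where the hypothesis $f \notin L\log^+ L(Q_0)$ is used, through the $L\log L$ theory of the maximal function.

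For the positive part I would simply invoke Theorem~\ref{construct_dyaJNp}. Since $f \in L^1(Q_0)$, that theorem gives
\[
\norm{(M^d_{Q_0}f)^{1/p}}_{JN^d_p(Q_0)}^p \leq c\,\norm{f}_{L^1(Q_0)} < \infty ,
\]
so $(M^d_{Q_0}f)^{1/p} \in JN^d_p(Q_0)$; no further work is required here.

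For the negative part, the key observation is that
\[
\int_{Q_0} \bigl\lvert (M^d_{Q_0}f)^{1/p} \bigr\rvert^p \dx = \int_{Q_0} M^d_{Q_0}f \dx ,
\]
so that $(M^d_{Q_0}f)^{1/p} \in L^p(Q_0)$ if and only if $M^d_{Q_0}f \in L^1(Q_0)$. It therefore suffices to show $M^d_{Q_0}f \notin L^1(Q_0)$, and this is exactly the content of Stein's $L\log L$ theorem in its dyadic form: one needs the implication that $M^d_{Q_0}f \in L^1(Q_0)$ forces $f \in L\log^+ L(Q_0)$, so that the hypothesis $f \notin L\log^+ L(Q_0)$ yields $\int_{Q_0} M^d_{Q_0}f \dx = \infty$.

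The main point to be careful about is that Stein's characterization is classically stated for the Hardy--Littlewood maximal function, while here the \emph{dyadic} maximal function appears; since $M^d_{Q_0}f \leq M_{Q_0}f$ pointwise, one cannot transfer non-integrability from $M_{Q_0}f$ to $M^d_{Q_0}f$, and so I would argue intrinsically. Applying the Calder\'{o}n--Zygmund decomposition (with integral averages) at a level $\lambda \geq \dashint_{Q_0}\abs{f}\dx$ writes $\{M^d_{Q_0}f > \lambda\}$ as a union of maximal dyadic cubes $Q_i$ whose parents $Q_i'$ satisfy $\dashint_{Q_i'}\abs{f}\dx \leq \lambda$; hence $\int_{Q_i}\abs{f}\dx \leq \int_{Q_i'}\abs{f}\dx \leq 2^n\lambda\abs{Q_i}$, and summing gives the reverse estimate
\[
\int_{\{M^d_{Q_0}f > \lambda\}} \abs{f} \dx \leq 2^n \lambda \, \bigl\lvert \{M^d_{Q_0}f > \lambda\} \bigr\rvert .
\]
Combining this with the layer-cake identities $\int_{Q_0}\abs{f}\log^+\abs{f}\dx = \int_1^\infty \lambda^{-1}\int_{\{\abs{f}>\lambda\}}\abs{f}\dx\dla$ and $\int_{Q_0}M^d_{Q_0}f\dx = \int_0^\infty \lvert\{M^d_{Q_0}f>\lambda\}\rvert\dla$, together with the inclusion $\{\abs{f}>\lambda\}\subseteq\{M^d_{Q_0}f>\lambda\}$ (valid almost everywhere by the Lebesgue differentiation theorem), yields a bound of the form
\[
\int_{Q_0}\abs{f}\log^+\abs{f}\dx \leq 2^n \int_{Q_0} M^d_{Q_0}f\dx + C ,
\]
where the finite additive term $C$ collects the contribution of the range $\lambda < \dashint_{Q_0}\abs{f}\dx$ and is finite because $f \in L^1(Q_0)$. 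Consequently $f \notin L\log^+ L(Q_0)$ forces $\int_{Q_0} M^d_{Q_0}f\dx = \infty$, which is the desired conclusion. Establishing this reverse estimate for the dyadic maximal function, rather than borrowing Stein's result verbatim, is the crux of the argument.
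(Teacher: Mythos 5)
Your proposal is correct and has the same two-step structure as the paper's proof: membership in $JN^d_p(Q_0)$ is read off from Theorem~\ref{construct_dyaJNp}, and failure of $L^p$-membership is reduced via $\int_{Q_0}(M^d_{Q_0}f)^{p\cdot\frac1p}\dx=\int_{Q_0}M^d_{Q_0}f\dx$ to the statement that $M^d_{Q_0}f\notin L^1(Q_0)$. The one place you diverge is that the paper simply cites Stein's theorem in the form ``$g\in L\log^+L(Q_0)$ if and only if $M^d_{Q_0}g\in L^1(Q_0)$,'' whereas you prove the implication actually needed ($M^d_{Q_0}f\in L^1$ forces $f\in L\log^+L$) from scratch. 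Your reason for doing so is legitimate: Stein's result is classically stated for the Hardy--Littlewood maximal function, and since $M^d_{Q_0}f\le M_{Q_0}f$ the inequality points the wrong way for transferring this particular implication to the dyadic operator. Your substitute argument is sound: for $\lambda\ge\dashint_{Q_0}\abs{f}\dx$ the level set $\{M^d_{Q_0}f>\lambda\}$ is the union of the maximal Calder\'on--Zygmund cubes $Q_i$, whose parents satisfy $\dashint_{Q_i'}\abs{f}\dx\le\lambda$, giving the reverse weak-type bound $\int_{\{M^d_{Q_0}f>\lambda\}}\abs{f}\dx\le 2^n\lambda\,\lvert\{M^d_{Q_0}f>\lambda\}\rvert$; combined with the layer-cake identities and the almost-everywhere inequality $\abs{f}\le M^d_{Q_0}f$ from the Lebesgue differentiation theorem, this yields $\int_{Q_0}\abs{f}\log^+\abs{f}\dx\le 2^n\int_{Q_0}M^d_{Q_0}f\dx+C$ with $C<\infty$ because $f\in L^1(Q_0)$, which is exactly what is needed. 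So your proof buys self-containedness and resolves a point the paper glosses over, at the cost of reproducing the standard proof of the relevant half of Stein's theorem in the dyadic setting.
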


\begin{proof}
Since $f \in L^1(Q_0)$, it follows that $(M^d_{Q_0}f)^\frac{1}{p} \in JN^d_{p}(Q_0)$ by Theorem~\ref{construct_dyaJNp}.
We know that a function $g$ is in $L \log^+ L(Q_0)$ if and only if $M^d_{Q_0}g$ is in $L^1(Q_0)$~\cite{stein_LlogL}.
Therefore, we have $M^d_{Q_0}f \notin L^1(Q_0)$, and thus $(M^d_{Q_0}f)^\frac{1}{p} \notin L^p(Q_0)$.
\end{proof}

This provides a method to construct functions in $JN_p^d \setminus L^p$. 
Consider a one-dimensional example. Let $I_0 = (0,\frac18)$ and $f:I_0\to\mathbb R$, 
\[
f(x) = \frac{\chi_{(0,1/2)}(x)}{x (\log x)^2 } .
\]
It holds that $f \in L^1(I_0) \setminus L \log^+ L(I_0)$.
Since $f$ is monotone on $I_0$, its maximal function is monotone on $I_0$ as well.
Hence, it cannot be in the standard John--Nirenberg space $JN_p(I_0)$, $1<p<\infty$, since $JN_p(I_0) = L^p(I_0)$ for monotone functions~\cite{korte}.
Thus, we have $(M^d_{I_0}f)^\frac{1}{p} \in JN^d_p(I_0) \setminus JN_p(I_0)$ and $(M^d_{I_0}f)^\frac{1}{p}\in JN^d_{p}(I_0) \setminus L^p(I_0)$.

\section{Completeness of $JN^d_p$}

The standard $BMO$ is complete with respect to the $BMO$ seminorm; see \cite{neri}.
We prove that the dyadic John--Nirenberg space is complete. 
Our proof also works for the standard John--Nirenberg space $JN_p$; see~\eqref{original_JNp}.

\begin{theorem}
\label{completeness}
Let $Q_0 \subset \mathbb{R}^n$ be a cube.
The space $JN^d_p(Q_0)$ is complete with respect to the seminorm in Definition \ref{Def_JN^dp}. 
\end{theorem}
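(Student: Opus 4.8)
The plan is to follow the standard recipe for completeness: extract from a Cauchy sequence a candidate limit in a weaker topology, and then upgrade the convergence to the seminorm. The crucial structural observation is that the seminorm already controls an $L^1$-oscillation. Indeed, applying the definition to the trivial one-cube collection $\{Q_0\}$ gives
\[
\norm{g-g_{Q_0}}_{L^1(Q_0)} = |Q_0|\dashint_{Q_0}|g-g_{Q_0}|\dx \leq |Q_0|^{1-\frac1p}\norm{g}_{JN^d_p(Q_0)}
\]
for every $g\in JN^d_p(Q_0)$. This is the bridge between the seminorm and $L^1$.

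First I would address the fact that $\norm{\cdot}_{JN^d_p(Q_0)}$ is only a seminorm, constants having zero seminorm. Given a Cauchy sequence $\{f_j\}$, I replace it by the normalized sequence $\tilde f_j = f_j - (f_j)_{Q_0}$; since subtracting a constant leaves the seminorm unchanged, $\{\tilde f_j\}$ is Cauchy with the same seminorm differences and satisfies $(\tilde f_j)_{Q_0}=0$. Applying the displayed bound to $g=\tilde f_j-\tilde f_k$ (for which $g_{Q_0}=0$) yields
\[
\norm{\tilde f_j-\tilde f_k}_{L^1(Q_0)} \leq |Q_0|^{1-\frac1p}\norm{\tilde f_j-\tilde f_k}_{JN^d_p(Q_0)},
\]
so $\{\tilde f_j\}$ is Cauchy in $L^1(Q_0)$ and converges in $L^1(Q_0)$ to some $f$. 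This $f$ is the candidate limit; by completeness of $L^1(Q_0)$ it is well defined and lies in $L^1(Q_0)$.

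Next I would upgrade $L^1$-convergence to seminorm-convergence. The elementary continuity lemma I need is that $g_k\to g$ in $L^1(Q)$ forces $\dashint_Q|g_k-(g_k)_Q|\dx \to \dashint_Q|g-g_Q|\dx$, which follows from the estimate $\norm{(g_k-(g_k)_Q)-(g-g_Q)}_{L^1(Q)}\leq 2\norm{g_k-g}_{L^1(Q)}$ and the reverse triangle inequality. Now fix $\varepsilon>0$ and $N$ with $\norm{\tilde f_j-\tilde f_k}_{JN^d_p(Q_0)}<\varepsilon$ for $j,k\geq N$. For any \emph{finite} collection $\{Q_i\}_{i=1}^m$ of pairwise disjoint dyadic subcubes,
\[
\sum_{i=1}^m |Q_i|\lp\dashint_{Q_i}|(\tilde f_j-\tilde f_k)-(\tilde f_j-\tilde f_k)_{Q_i}|\dx\rp^p \leq \varepsilon^p,
\]
and since this sum has only finitely many terms I may let $k\to\infty$, using the continuity lemma on each $Q_i$, to get the same bound with $\tilde f_k$ replaced by $f$. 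Taking the supremum first over all finite collections, and then passing to countable collections via the monotone limit of partial sums, gives $\norm{\tilde f_j-f}_{JN^d_p(Q_0)}\leq\varepsilon$ for all $j\geq N$. In particular $\tilde f_j-f = (\tilde f_j-\tilde f_N)+(\tilde f_N-f)$ is a difference of $JN^d_p$ functions of finite seminorm, so $f\in JN^d_p(Q_0)$, and $\tilde f_j\to f$ in seminorm; since the seminorm ignores the constants $(f_j)_{Q_0}$, the original sequence $\{f_j\}$ converges to $f$ as well.

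The step I expect to be the main obstacle is the interchange of the supremum over (countable) collections with the limit $k\to\infty$. One cannot pass to the limit inside an infinite sum and a supremum simultaneously without care; the resolution is precisely the reduction to \emph{finite} subcollections, where the limit is harmless, followed by taking the supremum afterward. The secondary subtlety is the seminorm-versus-norm bookkeeping, handled once and for all by the normalization $\tilde f_j=f_j-(f_j)_{Q_0}$, which is legitimate exactly because the seminorm is translation-invariant under constants. I also note that this argument uses nothing specific to the dyadic cubes beyond the one-cube bound, so the same proof gives completeness of the standard space $JN_p(Q_0)$ in~\eqref{original_JNp}.
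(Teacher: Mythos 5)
Your proof is correct, and it reaches the limit by a noticeably different mechanism than the paper. Both arguments identify the candidate limit as the $L^1(Q_0)$ limit of $f_j-(f_j)_{Q_0}$ (the paper also extracts this from the one-cube collection $\{Q_0\}$), but they diverge in how the convergence is upgraded to the seminorm. The paper follows the Riesz--Fischer pattern: it picks a subsequence with $\lVert f_{j_{m+1}}-f_{j_m}\rVert_{JN^d_p}<2^{-m}$, builds the limit as an absolutely convergent telescoping series to get almost everywhere convergence, and then applies Fatou's lemma directly to the countable sum $\sum_i |Q_i|(\dashint_{Q_i}|g-g_j|\dx)^p$; this forces it to work with collection-dependent limit functions $g=\sum_i\chi_{Q_i}(f-f_{Q_i})$-type objects and to reconcile them with the global limit via the identity $g^{Q_0}-g=(g^{Q_0})_{Q_i}$ on each $Q_i$. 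You instead avoid subsequences, almost everywhere convergence, and Fatou altogether: completeness of $L^1$ gives a single global limit $f$, the functional $g\mapsto\dashint_Q|g-g_Q|\dx$ is continuous under $L^1(Q)$ convergence, and the interchange of limit and countable sum is handled by restricting to finite subcollections and recovering the countable case as a monotone supremum of partial sums. Your route is the more elementary of the two and keeps the bookkeeping lighter, since there is only one limit function to track; the paper's Fatou argument is the more standard template and transfers verbatim to settings where one only has almost everywhere convergence of a subsequence rather than norm convergence. Both arguments use nothing dyadic beyond the one-cube bound, so both yield completeness of the standard $JN_p(Q_0)$ as well.
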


\begin{proof}
Assume that $(f_j)_{j\in\mathbb N}$ is a Cauchy sequence in $JN^d_p(Q_0)$ and let $\varepsilon >0$. 
There exists $j_\varepsilon \in \mathbb{N}$ such that 
\[
\lVert f_j - f_k\rVert_{JN^d_{p}(Q_0)} < \varepsilon
\quad\text{whenever}\quad j,k \geq j_\varepsilon.
\]
Consider a collection $\{Q_i\}_{i\in\mathbb N}$ of pairwise disjoint dyadic cubes $Q_i \subset Q_0$, $i\in\mathbb N$.
Let
\[
g_j = \sum_{i=1}^\infty \chi_{Q_i} (f_j - (f_j)_{Q_i})
\]
and observe that
\begin{align*}
\sum_{i=1}^\infty |Q_i| \biggl(\dashint_{Q_i} |g_{j} - g_k| \dx \biggr)^p 
&= \sum_{i=1}^\infty |Q_i| \biggl( \dashint_{Q_i} |f_{j} - (f_{j})_{Q_i} - (f_k - (f_k)_{Q_i})| \dx \biggr)^p \\
&= \sum_{i=1}^\infty |Q_i| \biggl( \dashint_{Q_i} |f_{j} - f_{k} - (f_j - f_k)_{Q_i}| \dx\biggl)^p \\
&\leq \lVert f_{j} - f_{k}\rVert_{JN^d_{p}(Q_0)}^p  .
\end{align*}
Choose a subsequence $(f_{j_m})_{m\in\mathbb N}$ such that
\[
\lVert f_{j_{m+1}} - f_{j_m}\rVert_{JN^d_{p}(Q_0)} < \frac{1}{2^m}
\]
for every $m \in \mathbb{N}$.
Denote
\[
h_l = \sum_{m=1}^l |g_{j_{m+1}} - g_{j_m}| \quad \text{and} \quad h = \sum_{m=1}^\infty |g_{j_{m+1}} - g_{j_m}| .
\]
It then holds that $ \lim_{l \to \infty} h_l = h $.
By using Fatou's lemma and Minkowski's inequality, we obtain
\begin{align*}
\biggl(\sum_{i=1}^\infty |Q_i|\biggl( \dashint_{Q_i} |h| \dx \biggr)^p\biggr)^\frac{1}{p} 
&\leq \liminf_{l \to \infty} \biggl( \sum_{i=1}^\infty  |Q_i|\biggl(\dashint_{Q_i} |h_l| \dx\biggr)^p\biggr)^\frac{1}{p} \\
&\leq \liminf_{l \to \infty} \sum_{m=1}^l\biggl(\sum_{i=1}^\infty  |Q_i| \biggl(\dashint_{Q_i} |g_{j_{m+1}} - g_{j_m}| \dx\biggr)^p\biggr)^\frac{1}{p} \\
&\leq \sum_{m=1}^\infty \lVert f_{j_{m+1}} - f_{j_m}\rVert_{JN^d_{p}(Q_0)} 
\leq \sum_{m=1}^\infty \frac{1}{2^m} = 1 .
\end{align*}
Thus, $h \in L^1(Q_i)$ for every $i\in\mathbb N$ and consequently $h(x) < \infty$ for almost every $x \in \bigcup_{i=1}^\infty  Q_i$.
This implies that the series in
\[
g = g_{j_1} + \sum_{m=1}^\infty (g_{j_{m+1}} - g_{j_m})
\]
converges absolutely for almost every $x \in \bigcup_{i=1}^\infty  Q_i$.
Hence, we have
\begin{align*}
g &= g_{j_1} + \sum_{m=1}^\infty (g_{j_{m+1}} - g_{j_m})
= \lim_{l \to \infty} \Bigl(g_{j_1} + \sum_{m=1}^{l-1} (g_{j_{m+1}} - g_{j_m}) \Bigr) 
= \lim_{l \to \infty} g_{j_l} = \lim_{m \to \infty} g_{j_{m}}
\end{align*}
for almost every $x \in \bigcup_{i=1}^\infty  Q_i$.
By Fatou's lemma, we obtain
\begin{equation}
\label{eq1}
\begin{split}
\sum_{i=1}^\infty |Q_i| \biggl(\dashint_{Q_i} |g - g_j| \dx\biggr)^p 
&\leq \liminf_{m \to \infty} \sum_{i=1}^\infty |Q_i| \biggl( \dashint_{Q_i} |g_{j_m} - g_j| \dx \biggr)^p \\
&\leq \liminf_{m \to \infty}\lVert f_{j_m} - f_j\rVert_{JN^d_{p}(Q_0)}^p < \varepsilon^p ,
\end{split}
\end{equation}
whenever $j \geq j_\varepsilon$.

Consider the collection consisting only of the cube $Q_0$.
Then as above, we have
\[
g_{j}^{Q_0} = f_{j} - (f_{j})_{Q_0}
\]
and
\[
g^{Q_0} = g^{Q_0}_{j_1} + \sum_{m=1}^\infty (g^{Q_0}_{j_{m+1}} - g^{Q_0}_{j_m}) = \lim_{m \to \infty} g_{j_m}^{Q_0}
\]
almost everywhere in $Q_0$.
Similarly, we obtain
\begin{align*}
|Q_0| \biggl( \dashint_{Q_0} |g^{Q_0} - g_j^{Q_0}| \dx\biggr)^p 
&\leq \liminf_{m \to \infty}  |Q_0| \biggl( \dashint_{Q_0} |g_{j_m}^{Q_0} - g_j^{Q_0}| \dx\biggr)^p \\
&\leq \liminf_{m \to \infty}\lVert f_{j_m} - f_j\rVert_{JN^d_{p}(Q_0)}^p < \varepsilon^p ,
\end{align*}
whenever $j \geq j_\varepsilon$.
We see that $g^{Q_0} \in L^1(Q_0)$ and $g_{j}^{Q_0} = f_{j} - (f_{j})_{Q_0} \to g^{Q_0}$ in $L^1(Q_0)$ as $j\to\infty$, and thus
\begin{align*}
(f_{j_m})_{Q_i} - (f_{j_m})_{Q_0} = \dashint_{Q_i} \big( f_{j_m} - (f_{j_m})_{Q_0} \big)\dx \to \dashint_{Q_i} g^{Q_0}\dx
\end{align*}
as $m \to \infty$.
Hence, for almost every $x \in Q_i$, it holds that
\begin{align*}
g^{Q_0} - g  &= \lim_{m \to \infty} \big( f_{j_m} - (f_{j_m})_{Q_0} - (f_{j_m} - (f_{j_m})_{Q_i}) \big) \\
&= \lim_{m \to \infty} \big( (f_{j_m})_{Q_i} - (f_{j_m})_{Q_0} \big) = (g^{Q_0})_{Q_i} .
\end{align*}
This together with~\eqref{eq1} implies
\begin{align*}
\sum_{i=1}^\infty |Q_i|\biggl( \dashint_{Q_i} |g^{Q_0} - f_j - (g^{Q_0} - f_j)_{Q_i}| \dx \biggr)^p
&= \sum_{i=1}^\infty |Q_i| \biggl( \dashint_{Q_i} |g^{Q_0} - (g^{Q_0})_{Q_i} - g_j| \dx \biggr)^p \\
&= \sum_{i=1}^\infty|Q_i| \biggl( \dashint_{Q_i} |g - g_j| \dx\biggr)^p < \varepsilon^p ,
\end{align*}
whenever $j \geq j_\varepsilon$.
Since this holds for any collection $\{Q_i\}_{i\in\mathbb N}$, we can take the supremum over the collections to obtain
\begin{align*}
\lVert g^{Q_0} - f_{j}\rVert_{JN^d_{p}(Q_0)} < \varepsilon ,
\end{align*}
whenever $j \geq j_\varepsilon$.
This concludes that $g^{Q_0} = (g^{Q_0} - f_{j}) + f_{j} \in JN^d_{p}(Q_0)$ and $f_j$ converges to $g^{Q_0}$ in $JN^d_p(Q_0)$ as $j\to\infty$.

\end{proof}

\end{document}